\newcommand{\B}[1]{{\mathbf #1}}
\newcommand{\C}[1]{{\mathcal #1}}
\newtheorem{theorem}[subsection]{Theorem}
\newtheorem{corollary}[subsection]{Corollary}
\newtheorem{lemma}[subsection]{Lemma}
\newtheorem{proposition}[subsection]{Proposition}
\theoremstyle{definition}
\newtheorem{definition}[subsection]{Definition}
\newtheorem{example}[subsection]{Example}
\theoremstyle{remark}
\newtheorem{remark}[subsection]{Remark}
\numberwithin{figure}{section}
\numberwithin{table}{section}
\numberwithin{equation}{section}
\newcommand{\la}{\langle}
\newcommand{\ra}{\rangle}
\newcommand{\OP}{\operatorname}
\begin{document}

\title[On the autonomous norm on $\OP{Ham}(\B T^2)$]
{On the autonomous norm on the group of Hamiltonian
diffeomorphisms of the torus}
\author[Brandenbursky]{Michael Brandenbursky}
\address{Ben-Gurion University, Israel}
\email{brandens@math.bgu.ac.il}
\author[K\k{e}dra]{Jarek K\k{e}dra}
\address{University of Aberdeen and University of Szczecin}
\email{kedra@abdn.ac.uk}
\author[Shelukhin]{Egor Shelukhin}
\address{IAS, Princeton}
\email{egorshel@gmail.com}

\keywords{Hamiltonian diffeomorphisms, autonomous norm, quasi-morphisms, braid groups}
\subjclass[2000]{Primary 53; Secondary 57}

\begin{abstract}
We prove that the autonomous norm on the group of Hamiltonian diffeomorphisms
of the two-dimensional torus is unbounded. We provide explicit examples
of Hamiltonian diffeomorphisms with arbitrarily large autonomous norm.
For the proofs we construct quasimorphisms on $\OP{Ham}(\B T^2)$ and some
of them are Calabi.
\end{abstract}

\maketitle

\setcounter{tocdepth}{1}
\tableofcontents

\section{Introduction}\label{S:intro}
Let $M$ be a smooth manifold and let $X\colon M\to TM$ be a compactly supported
vector field with the flow $\Psi_X\colon \B R\to \OP{Diff}(M)$.
The time-one map $\Psi_X(1)$ of the flow is called the
{\em autonomous} diffeomorphism associated with the vector field $X$. The
subset $\OP{Aut}(M)\subset \OP{Diff}_0(M)$ of autonomous diffeomorphisms is
conjugation invariant and, since the group of diffeomorphisms isotopic to the
identity is simple, it generates $\OP{Diff}_0(M)$.  In other words, a compactly
supported diffeomorphism of $M$ isotopic to the identity is a finite product of
autonomous ones. One may ask for a minimal decomposition and this question
leads to the concept of the autonomous norm which is defined by
$$
\|f\|_{\OP{Aut}} := \min\{n\in \B N\,|\,f = a_1\cdots a_n,\,a_i\in \OP{Aut}(M)\}.
$$
It is the word norm associated with the generating set $\OP{Aut}(M)$.
Since this set is conjugation invariant, so is the autonomous
norm. It follows from the work of Burago-Ivanov-Polterovich \cite{MR2509711}
and Tsuboi \cite{MR2523489,MR2874899} that for many manifolds all conjugation
invariant norms on $\OP{Diff}_0(M)$ are bounded. Hence the
autonomous norm is bounded in those cases.

The situation is different for the groups of area preserving diffeomorphisms of
surfaces. For example, the autonomous norm on the group
$\OP{Diff}_0(\B D^2,\OP{area})$ of compactly supported area preserving
diffeomorphisms of the open disc is unbounded \cite{MR3044593}. The same is
true for the group $\OP{Ham}(\Sigma)$ of Hamiltonian diffeomorphisms of
closed oriented surfaces different from the torus \cite{MR2104597, MR3391653,1405.7931}.
The present paper deals with the remaining case of the torus:

\begin{theorem}\label{T:main}
The autonomous norm on the group $\OP{Ham}(\B T^2)$ of Hamiltonian diffeomorphism
of the torus is unbounded.
\end{theorem}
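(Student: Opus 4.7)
The plan is to construct a homogeneous quasimorphism $\Phi\colon \OP{Ham}(\B T^2)\to \B R$ that vanishes on the generating set $\OP{Aut}(\B T^2)\cap \OP{Ham}(\B T^2)$ and takes arbitrarily large values. Given such a $\Phi$ with defect $D(\Phi)$, the quasimorphism inequality applied to a decomposition $f=a_1\cdots a_n$ into autonomous factors gives $\|f\|_{\OP{Aut}}\geq |\Phi(f)|/D(\Phi)$. Vanishing on $\OP{Aut}$ is in fact automatic from homogeneity together with \emph{any} uniform bound on $\OP{Aut}$: for $a=\Psi_X(1)$ we have $a^k=\Psi_X(k)\in\OP{Aut}$, so $\Phi(a)=\Phi(a^k)/k\to 0$. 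Thus it suffices to build a homogeneous quasimorphism bounded on autonomous elements together with a single $f$ for which $\Phi(f)\neq 0$; the iterates $f^k$ then satisfy $\Phi(f^k)=k\Phi(f)\to\infty$ and force $\|f^k\|_{\OP{Aut}}\to\infty$.

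For the construction of $\Phi$ I would employ the Gambaudo--Ghys scheme based on the surface pure braid group $P_n(\B T^2)$, which carries many homogeneous quasimorphisms $\phi$ (coming for instance from mapping class groups of punctured tori or from non-abelian free subgroups). Given a Hamiltonian isotopy $\{f_t\}$ from the identity to $f$ and a configuration $z=(z_1,\ldots,z_n)\in \OP{Conf}_n(\B T^2)$, dragging $z$ along $\{f_t\}$ and closing up with a fixed reference path produces a pure braid $\gamma_{f,z}\in P_n(\B T^2)$; one sets
$$
\Phi(f) \;=\; \int_{\OP{Conf}_n(\B T^2)} \phi(\gamma_{f,z})\, d\mu(z).
$$
Two checks are needed: integrability of the integrand (from the linear growth of $\phi$ in word length together with Moser-type regularity of the isotopy), and independence of the chosen Hamiltonian isotopy. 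The second check is where $\B T^2$ is harder than higher genus surfaces: $\pi_1(\OP{Ham}(\B T^2))$ contributes, so one must arrange that $\phi$ kills its image inside $P_n(\B T^2)$. This is precisely where the Calabi property of $\phi$ advertised in the abstract is used to force the descent to $\OP{Ham}(\B T^2)$.

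The decisive technical step is showing that $\Phi$ is bounded on autonomous Hamiltonians. If $h=\Psi_X(1)$ is generated by an autonomous $H\colon \B T^2\to \B R$, the regular level sets of $H$ foliate a full-measure open subset of $\B T^2$ by invariant circles along which the flow is a reparametrised rotation. For almost every $z$ the braid $\gamma_{h^k,z}$ therefore lies in an abelian subgroup of $P_n(\B T^2)$ generated by commuting translations along these circles; one must design $\phi$ so that it vanishes on all such subgroups, again exploiting the Calabi property. Granted this, $\phi(\gamma_{h^k,z})$ is uniformly bounded in $k$ and a.e.\ $z$, so $\Phi(h^k)$ is bounded in $k$ and, by homogeneity, $\Phi(h)=0$. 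To exhibit the promised family, I would take $f$ supported in a small embedded disc $D\subset \B T^2$ and realising a non-trivial pure braid $\beta$ on $n$ marked points inside $D$: configurations sitting entirely inside $D$ form a positive-measure set contributing $\phi(\beta)\neq 0$ to the integral, so $\Phi(f)\neq 0$ and the iterates $f^k$ provide explicit examples with arbitrarily large autonomous norm.

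The principal obstacle is the interplay between the descent to $\OP{Ham}(\B T^2)$ and the vanishing on autonomous elements, both of which impose restrictive compatibility conditions on $\phi$. On higher genus surfaces one bypasses these with hyperbolic quasimorphisms and the Entov--Polterovich Calabi quasimorphism; on $\B T^2$, with its abelian fundamental group and the known absence of a classical Calabi quasimorphism on $\OP{Ham}$, one is forced to construct $\phi$ carefully so that a single Calabi-type invariance handles simultaneously the $\pi_1(\OP{Ham}(\B T^2))$ ambiguity and the abelian braid subgroups produced by integrable autonomous flows.
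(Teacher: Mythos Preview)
Your overall architecture (Gambaudo--Ghys averaging of a braid-group quasimorphism, then Lipschitz bound with respect to the autonomous norm) is exactly the paper's, but the implementation contains several genuine gaps.

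First, the role you assign to the Calabi property is inverted. In the paper's setup one works with $n=2$, uses the isomorphism $\B P_2(\B T^2)\cong \B F_2\times\B Z^2$, and pulls back a quasimorphism $\psi$ on $\B F_2$. The dichotomy is: $\psi[a,b]\neq 0$ gives a Calabi quasimorphism, while $\psi[a,b]=0$ gives one that is $C^0$-continuous. The vanishing-on-autonomous lemma requires $\psi$ to vanish on primitive elements \emph{and} on $[a,b]$; hence the quasimorphisms that detect the autonomous norm are precisely the non-Calabi ones. The Calabi quasimorphisms mentioned in the abstract are a side result, not the engine of the main theorem.

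Second, your nontriviality argument fails. If $f$ is supported in an embedded disc $D\subset\B T^2$, then for $n=2$ and configurations inside $D$ the resulting pure braid is a power of the full twist $\sigma^2$, whose projection to $\B F_2$ is a power of $[a,b]$. Since you have just been forced to take $\psi[a,b]=0$, the contribution is zero, and disc-supported $f$ give $\Phi(f)=0$. The paper instead evaluates $\Phi$ on eggbeater-type diffeomorphisms $w(v,h)$ built from a vertical and a horizontal shear supported in two crossing thin annuli; configurations with one point in the intersection square and the other outside the annuli produce the braid projecting to the word $w(a,b)\in\B F_2$, and a careful area estimate shows this dominates.

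Third, your description of autonomous braids as lying in ``abelian subgroups generated by commuting translations'' is not sharp enough to isolate the right vanishing condition. The paper's case analysis (over all combinations of constant, contractible periodic, non-contractible periodic, and heteroclinic orbits for the two strands) shows that $\pi(\gamma(h^p,x,y))$ is always a power of a primitive element, a power of $[a,b]$, a product of one of each, or takes only finitely many values as $p\to\infty$. This is what singles out the conditions $\psi|_{\text{primitives}}=0$ and $\psi[a,b]=0$. The key algebraic input, which your sketch lacks entirely, is the theorem of Bardakov--Shpilrain--Tolstykh that every primitive element of $\B F_2$ is a product of two palindromes; combined with the observation that $\B Z/2\times\B Z/2$-invariant homogeneous quasimorphisms vanish on palindromes, this yields the required vanishing on primitives.

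Finally, a minor point: there is no $\pi_1(\OP{Ham}(\B T^2))$ ambiguity to resolve. Hamiltonian loops evaluate to contractible loops in $\B T^2$, so the braid $\gamma(f,x,y)$ is already well defined for Hamiltonian isotopies; the ambiguity you describe arises only when one tries to extend to $\OP{Symp}_0(\B T^2)$.
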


One way to prove unboundedness of a conjugation invariant norm on a group $G$
is to construct an unbounded quasimorphism $\psi\colon G\to \B R$ which is
Lipschitz with respect to this norm. If such a norm is a word norm
then it suffices to construct a quasimorphism which is bounded on the generating
set which implies that it is Lipschitz. If $G=\OP{Diff}_0(M,\OP{vol})$ is
the group of volume preserving diffeomorphisms of a manifold $M$ then
nontrivial quasimorphisms on $G$ can be obtained from nontrivial quasimorphisms
on the fundamental group of $M$ as follows.

Let $z\in M$ be the basepoint and let ${\bf g}$ be an auxiliary Riemannian
metric on $M$. For every point $x\in M$ chose a path
$\gamma_x\colon [0,1]\to M$ from $z$ to $x$ by choosing a measurable
section of the map $\pi\colon P\to M$, where
$$
P=\{\gamma\colon [0,1]\to M\,|\, \gamma(0)=z,\,\gamma(1)=x \text{ and } \gamma
\text{ is a geodesic of } \bf g\}.
$$
Let $f\in \OP{Diff}_0(M,\OP{vol})$ and let $\{f_t\}$ be an isotopy from the
identity to~$f$. For every $x\in M$ the isotopy $\{f_t\}$ defines a loop
based at $x$ by
$\gamma(f,x) = \gamma_x \* f_t(x) \* \overline{\gamma_{f(x)}}$,
where the bar denotes the path in the reverse direction.
This loop is well defined up to homotopy of loops based at $z$ provided that evaluating loops of diffeomorphisms
of $M$ at the basepoint produces homotopically trivial loops in~$M$.
This holds, for example, if the center of the fundamental group of $M$
is trivial or if $\{f_t\}$ is a Hamiltonian isotopy in a symplectic manifold.

Let $\psi\colon \pi_1(M,z)\to \B R$ be a quasimorphism and let
$f\in \OP{Diff}_0(M,\OP{vol})$ be a compactly supported diffeomorphism isotopic
to the identity. Then, given that the volume of $M$ is finite, the map $\Psi\colon \OP{Diff}_0(M,\OP{vol})\to \B R$
defined by $$ \Psi(f) = \int_{M}\psi(\gamma(f,x)) dx $$ is a well defined
quasimorphism. This construction and the argument are due to Polterovich
\cite{MR2276956}. Notice that the construction can be performed for an action
$G\to \OP{Diff}_0(M,\OP{vol})$ of a group $G$ on $M$. For example, if $M$ is
simply connected then $G=\OP{Diff}_0(M)$ can act on another manifold which is
not simply connected. Concretely, if $\Sigma$ is a surface then
$\OP{Diff}_0(\Sigma,\OP{area})$ acts on the configuration space
$M=\B X_n(\Sigma)$. The fundamental group of this configuration space is (by
definition) the pure braid group on $n$-strings on the surface $\Sigma$.
Geometrically, this construction generalizes the above one in the sense that an
isotopy and a configuration of points defines a pure braid
$\gamma(f,x_1,\ldots, x_n)$ rather than a single loop (up to homotopy).
We provide more details in Section~\ref{SS:gg}.  Historically
the braid approach was the first original idea due to Gambaudo and Ghys
\cite{MR2104597} applied to diffeomorphisms of the disc and the sphere.  It was later
generalized by the first named author to other surfaces \cite{MR3391653}.
To sum up, the construction gives a linear map
$$
\C G\colon \B Q(\B P_n(\Sigma))\to \B Q(\OP{Ham}(\Sigma)),
$$
from the space of homogeneous quasimorphisms on the pure braid group to the
space of homogeneous quasimorphisms on the group of Hamiltonian diffeomorphisms
of the surface.

There are two main problems in proving the unboundedness of the autonomous
norm. The first, which is a general one, is to show that the above construction
yields nontrivial quasimorphisms. The second is to show that among these
nontrivial quasimorphisms there are ones which are bounded on the set of
autonomous diffeomorphisms.  These are the main objectives of the present paper
as well as the earlier ones~\cite{MR3391653,MR3044593,1405.7931}.

The solution of the first problem has two parts. The first one, which is
essentially the same for all surfaces, is the claim that the kernel of the
composition
$\B Q(\B B_n(\Sigma))\to \B Q(\B P_n(\Sigma)) \to \B Q(\OP{Ham}(\Sigma))$
consists of homomorphisms.  The idea of
the proof is due to Ishida who did it in the case of the disc and the sphere
\cite{MR3181631} and his argument was generalized to all surfaces in
\cite{MR3044593}. The second part is to construct nontrivial quasimorphisms on
the full braid group. Here, the solution depends on the genus.

The problem of identifying quasimorphisms on braid groups which yield
quasimorphisms vanishing on autonomous diffeomorphisms is the main
problem in all the cases and, again, solutions depend on the genus. In
what follows we provide a proof of Theorem \ref{T:main} by reducing the
argument to several results which are then proved in the rest of the paper.

\subsection*{Proof of Theorem \ref{T:main}}
The structure of the proof is presented in the following composition of linear maps.
$$
\xymatrix
{
\B Q(\B F_2;\B Z/2\times \B Z/2)\ar[r] &\B Q(\B F_2) \ar[r]^{\pi^*} &
\B Q(\B P_2(\B T^2))\ar[r]^{\C G} & \B Q(\OP{Ham}(\B T^2))\\
}
$$
Here, $\B Q(G)$ denotes a space of homogeneous quasimorphisms on a group~$G$
and $\B Q(\B F_2;\B Z/2\times \B Z/2)\subset \B Q(\B F_2)$ is the
subspace of quasimorphisms invariant under the action generated by
inverting generators.
\begin{itemize}[leftmargin=*]
\item
The construction of Gambaudo and Ghys (Section \ref{SS:gg}), provides a linear
map $\C G\colon \B Q(\B P_n(\B T^2))\to \B Q(\OP{Ham}(\B T^2))$ from the space
of homogeneous quasimorphisms of the pure braid group to the space of
homogeneous quasimorphisms of the group $\OP{Ham}(\B T^2)$ of Hamiltonian
diffeomorphisms of the torus. This map has a nontrivial kernel in general and
the goal is to construct a suitable quasimorphism $\psi$ on the pure braid
group such that its image $\C G(\psi)$ is a nontrivial quasimorphism bounded on
the set of autonomous diffeomorphisms.
\item
In our proof, we specify the braid group to two strings.
There is an isomorphism $\B P_2(\B T^2)\cong \B F_2\times \B Z^2$ (Lemma
\ref{L:P2(T)}).  We construct a suitable quasimorphism on the pure braid group
by constructing a quasimorphism $\psi\colon \B F_2\to \B R$ on the free group
and composing it with the projection $\pi\colon \B P_2(\B T^2)\to \B F_2$.
The free group here is the quotient of the braid group by its
center and hence the projection is canonical (i.e. every automorphism of $\B P_2(\B T^2)$ 
descends to an automorphism of the quotient $ \B F_2$).  Thus the refined goal is to
construct a quasimorphism $\psi\colon \B F_2\to \B R$ such that the image
$\C G(\psi\circ \pi)$ is nontrivial
and bounded on the set of autonomous elements.
\item
Let $\B F_2=\la a,b\ra$ and let $\sigma_a,\sigma_b\in \OP{Aut}(\B F_2)$
be automorphisms defined by
$
\sigma_a(a)=a^{-1},\, \sigma_a(b)=b,\,
\sigma_b(a)=a$ and  $\sigma_b(b)=b^{-1}$. They generate an action
of $\B Z/2\times \B Z/2$ on $\B F_2$. Let $\B Q(\B F_2;\B Z/2 \times \B Z/2)$
denote the space of homogeneous quasimorphisms which are invariant under
this action. The composition
$$
\B Q(\B F_2;\B Z/2 \times \B Z/2)\stackrel{\pi^*}\to
\B Q(\B P_2(\B T^2))\stackrel{\C G}\to \B Q(\OP{Ham}(\B T^2))
$$
is injective (Proposition \ref{P:injectivity}). Moreover, the space
$\B Q(\B F_2;\B Z/2 \times \B Z/2)$ is infinite dimensional
(Proposition \ref{P:Q-inf-dim}).
We obtain this way an infinite dimensional space of quasimorphisms
on the group $\OP{Ham}(\B T^2)$ and the next step is to prove
that it contains quasimorphisms bounded on autonomous diffeomorphisms.
\item
We prove in Lemma \ref{L:autonomous} that if a quasimorphism
$\psi\in \B Q(\B F_2)$ vanishes on primitive elements and on the commutator
$[a,b]$ of the generators then the quasimorphism $\C G(\pi^*\psi)$ vanishes on
autonomous elements. This reduces our task to showing that the space
$\B Q(\B F_2;\B Z/2 \times \B Z/2)$ contains quasimorphisms vanishing on
primitive elements and the commutator of the generators. Observe
that the second condition is automatic. Indeed, if
$\psi \in \B Q(\B F_2;\B Z/2 \times \B Z/2)$ then we have the
following computation in which we use invariance under $\sigma_a$
and homogeneity (which implies conjugation invariance).
\label{Eq:commutator}
\begin{align*}
\psi[a,b] &= \psi\left( aba^{-1}b^{-1} \right)\\
&=\psi\left( \sigma_a\left( a^{-1}bab^{-1} \right) \right)\\
&=\psi\left( a^{-1}bab^{-1} \right)\\
&=\psi\left( a^{-1}\left( bab^{-1}a^{-1} \right)a \right)\\
&=\psi[b,a] = -\psi[a,b].
\end{align*}
It follows that $\psi[a,b]=0$.
\item
Let $\sigma = \sigma_a\circ \sigma_b\in \OP{Aut}(\B F_2)$ be the automorphism
acting on a word by inverting all its letters. If a quasimorphism
$\psi\in \B Q(\B F_2)$ is invariant under the action of $\B Z/2\times \B Z/2$
then it is invariant under the action generated by $\sigma$. This, in turn,
implies that $\psi$ vanishes on palindromes (see the proof of Corollary
\ref{C:autonomous}).  It has been observed by Bardakov, Shpilrain and Tolstykh
\cite{MR2125453} that a primitive element of the free group $\B F_2$ is a
product of two palindromes. Hence every quasimorphism from
$\B Q(\B F_2;\B Z/2 \times \B Z/2)$ vanishes on primitive elements.  This
finishes the proof.  \qed
\end{itemize}

\begin{remark}
In the proof Proposition \ref{P:injectivity}  which claims
the injectivity of the homomorphism
$\B Q(\B F_2;\B Z/2 \times \B Z/2)\to \B Q(\OP{Ham}(\B T^2))$,
we provide explicit examples of Hamiltonian diffeomorphisms
on which quasimorphisms of the form $\C G(\pi^*\psi)$ evaluate
nontrivially (Example \ref{E:eggbeater}). Such examples are quite
standard and have been considered, for example, by Khanevsky
\cite{1409.7420v2} and Polterovich-Shelukhin \cite{MR3437837}.
\end{remark}

\begin{remark}
Another side result is concerned with the Calabi property and continuity
of the quasimorphisms we construct in the paper, see Section \ref{SS:snake} for a discussion of the Calabi property,
and a new example of a Calabi-type quasimorphism. More precisely, if
$\psi\in \B Q(\B F_2)$ is a nontrivial quasimorphism vanishing on palindromes then $\C G(\pi^*\psi)$ is nontrivial (see Corollary ~\ref{C:palindromes} 
and the discussion that follows it). Moreover, as proven in \cite{MR2253051} and \cite[Proposition 4.1]{MR2884036}, 
\begin{itemize}
\item if $\psi[a,b]\neq 0$ then $\C G(\pi^*\psi)$ has the Calabi property;
\item if $\psi[a,b]=0$ then $\C G(\pi^*\psi)$ is continuous in $C^0$-topology.
\end{itemize}

It follows that the quasimorphisms constructed in the proof of Proposition \ref{P:Q-inf-dim} are nontrivial and $C^0$-continuous. Since by Lemma \ref{L:autonomous} 
such quasimorphisms vanish on autonomous diffeomorphisms, we can streng\-then Theorem \ref{T:main} to the following statement:
\end{remark}

\begin{theorem}\label{P:C0-closure}
The group $\OP{Ham}(\B T^2)$ equipped with the word norm associated with the 
$C^0$-closure of the set of autonomous diffeomorphisms has infinite diameter.
\end{theorem}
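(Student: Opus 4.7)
The plan is to upgrade Theorem \ref{T:main} by observing that the quasimorphisms produced in its proof are automatically $C^0$-continuous, hence descend to quasimorphisms that are Lipschitz with respect to the word norm defined by the $C^0$-closure of $\OP{Aut}(\B T^2)$. Concretely, the quasimorphisms of interest are $\Psi=\C G(\pi^*\psi)$ with $\psi\in \B Q(\B F_2;\B Z/2\times \B Z/2)$. The displayed computation on page~\pageref{Eq:commutator} shows that every such $\psi$ satisfies $\psi[a,b]=0$, so by the second bullet of the preceding remark (citing \cite{MR2253051} and \cite[Proposition~4.1]{MR2884036}) the quasimorphism $\Psi$ is continuous in the $C^0$-topology on $\OP{Ham}(\B T^2)$.

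The second step is to promote vanishing on the autonomous set to vanishing on its $C^0$-closure. By Lemma \ref{L:autonomous} and the palindrome argument in the proof of Theorem \ref{T:main}, every $\Psi$ arising from $\B Q(\B F_2;\B Z/2\times \B Z/2)$ vanishes identically on $\OP{Aut}(\B T^2)$. Combined with the $C^0$-continuity established in the first step, this forces $\Psi$ to vanish on the $C^0$-closure $S:=\overline{\OP{Aut}(\B T^2)}^{C^0}$. Note that $S$ is conjugation invariant (conjugation acts by $C^0$-homeomorphisms on $\OP{Aut}(\B T^2)$, which is already conjugation invariant) and contains $\OP{Aut}(\B T^2)$, which generates $\OP{Ham}(\B T^2)$; consequently the word norm $\|\cdot\|_S$ is well defined on the whole group.

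Finally, I would apply the standard word-norm Lipschitz estimate: a homogeneous quasimorphism $\Psi$ with defect $D$ that vanishes on a generating set $S$ satisfies $|\Psi(f)|\le D\,\|f\|_S$ for every $f\in \OP{Ham}(\B T^2)$, by induction on the length of a decomposition $f=s_1\cdots s_n$. By Propositions \ref{P:injectivity} and \ref{P:Q-inf-dim}, we may choose $\psi\in \B Q(\B F_2;\B Z/2\times \B Z/2)$ so that $\Psi=\C G(\pi^*\psi)$ is unbounded on $\OP{Ham}(\B T^2)$. The Lipschitz bound then exhibits elements with arbitrarily large $S$-norm, which is exactly the infinite-diameter statement.

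The main obstacle is really the imported $C^0$-continuity of Gambaudo--Ghys type quasimorphisms with vanishing commutator value; this is the only nontrivial analytic input, and is taken from \cite{MR2253051,MR2884036}. All other ingredients---the algebraic computation $\psi[a,b]=0$, the vanishing on autonomous elements via palindromes, the existence of an infinite-dimensional invariant family, and the Lipschitz bound---are already assembled in the body of the paper, so the proof reduces to citing them in the order above.
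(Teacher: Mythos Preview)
Your proposal is correct and follows essentially the same approach as the paper: the theorem is stated as a direct consequence of the preceding remark, using exactly the ingredients you list---$\psi[a,b]=0$ for $\psi\in\B Q(\B F_2;\B Z/2\times\B Z/2)$, the cited $C^0$-continuity criterion from \cite{MR2253051,MR2884036}, vanishing on autonomous elements via Lemma~\ref{L:autonomous}, and the standard Lipschitz bound. Your write-up is in fact more explicit than the paper's, which leaves the argument implicit in the remark.
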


\section{Preliminaries}\label{S:preliminaries}

In this section we provide necessary definitions, review in detail
the construction of Gambaudo-Ghys and state some known results which
we need for the proof.

\begin{definition}\label{D:norm}
Let $G$ be a group. A function $\|\cdot\|\colon G\to [0,\infty)$ is
called a {\em conjugation invariant norm} on $G$ if it satisfies the following conditions:
\begin{enumerate}
\item $\|f\|=0$ if and only if $f=1$,
\item $\left \|f^{-1}\right \|=\|f\|$,
\item $\|fg\|\leq \|f\|+\|g\|$,
\item $\left\|gfg^{-1}\right\|=\|f\|$.
\end{enumerate}
\end{definition}

\begin{definition}\label{D:qm}
A function $\psi\colon G\to \B R$ is called a {\em quasimorphism} if
there exist $D_{\psi}\geq 0$ such that the inequality
$$
\left|\psi(f) - \psi(fg) + \psi(g)\right| \leq D_{\psi}
$$
holds for all $f,g\in G$. A quasimorphism $\psi$ is called
homogeneous if
$$
\psi\left( f^n \right)=n\psi(f),
$$
for all $f\in G$ and $n\in \B Z$.
The space of all homogeneous quasimorphisms on a group $G$ is
denoted by $\B Q(G)$. Let ${\tt S}\subset G$. We denote by
$\B Q(G;{\tt S})$ the space of homogeneous quasimorphism which
vanish on ${\tt S}$.
\end{definition}

If $\psi\colon G\to \B R$ is a quasimorphism then its homogenization
$\overline{\psi}\colon G\to \B R,$ the unique homogeneous quasimorphism that differs from $\psi$ by a bounded function, satisfies
$$
\overline{\psi}(g)=\lim_{n\to \infty}\frac{\psi(g^n)}{n}.
$$
Moreover, the homogenization behaves well with respect to group actions.

\begin{lemma}\label{L:invariant-homogeneous}
Let $\psi\colon G\to \B R$ be a quasimorphism and let $\alpha\colon H\to\OP{Aut}(G)$
be an action of a group $H$ on the group $G$. If $\psi$ is invariant under
the action $\alpha$ then so is its homogenization.
\end{lemma}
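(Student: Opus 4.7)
The plan is to use the explicit formula for the homogenization, namely
$\overline{\psi}(g) = \lim_{n\to\infty} \psi(g^n)/n$, which is stated in the excerpt immediately before the lemma, and combine it with the fact that each $\alpha_h \in \mathrm{Aut}(G)$ is a group homomorphism, so it respects taking powers.

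Concretely, fix $h \in H$ and write $\phi = \alpha(h) \in \mathrm{Aut}(G)$. For any $g \in G$ and any positive integer $n$, since $\phi$ is a homomorphism we have $\phi(g)^n = \phi(g^n)$. Therefore
\[
\overline{\psi}(\phi(g)) \;=\; \lim_{n\to\infty} \frac{\psi(\phi(g)^n)}{n} \;=\; \lim_{n\to\infty} \frac{\psi(\phi(g^n))}{n}.
\]
By the hypothesis that $\psi$ is invariant under the action $\alpha$, we have $\psi(\phi(g^n)) = \psi(g^n)$ for every $n$, so the right-hand side equals $\lim_{n\to\infty} \psi(g^n)/n = \overline{\psi}(g)$. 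This gives $\overline{\psi}(\phi(g)) = \overline{\psi}(g)$ for every $g \in G$, i.e.\ $\overline{\psi}$ is $\alpha$-invariant.

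There is essentially no obstacle here: the only two ingredients are that automorphisms preserve powers (so the limit formula pulls $\phi$ inside), and that $\alpha$-invariance of $\psi$ passes term-by-term to the sequence $\psi(g^n)/n$ whose limit defines $\overline{\psi}$. No quasimorphism defect estimate is needed, because we apply invariance to $\psi$ itself before taking the limit.
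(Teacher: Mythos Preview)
Your proof is correct and follows essentially the same approach as the paper: both use the limit formula for the homogenization, pull the automorphism inside the power via $\phi(g)^n=\phi(g^n)$, and then apply the invariance of $\psi$ term by term.
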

\begin{proof}
The invariance of $\psi$ under the action $\alpha$ means that
$\psi(\alpha(h)(g)) = \psi(g)$ for every $h\in H$ and every $g\in G$.
The statement is a consequence of the following straightforward
computation.
\begin{align*}
\overline{\psi}(\alpha(h)(g)) &=\lim_{n\to \infty}\frac{\psi\left( (\alpha(h)(g))^n \right)}{n}\\
&= \lim_{n\to \infty}\frac{\psi\left(\alpha(h)(g^n)\right)}{n}\\
&= \lim_{n\to \infty}\frac{\psi\left(g^n\right)}{n}\\
&= \overline{\psi}(g).
\end{align*}
\end{proof}

\subsection{The Gambaudo-Ghys construction}\label{SS:gg}
Let $\Sigma$ be an oriented surface and let $\B X_n(\Sigma)$ denote the
space of configurations of ordered $n$-tuples of points in $\Sigma$.
Its quotient by the $n$-th symmetric group is the space of unordered
configurations and it is denoted by $\B C_n(\Sigma)$. The fundamental
groups
\begin{align*}
\B P_n(\Sigma)&:=\pi_1(\B X_n(\Sigma))\\
\B B_n(\Sigma)&:=\pi_1(\B C_n(\Sigma))
\end{align*}
are called the {\em pure braid group} and the (full) {\em braid group}
of the surface $\Sigma$ respectively.

Let $(z_1,\ldots,z_n)\in \B X_n(\Sigma)$ be an $n$-tuple of distinct points
which is the basepoint in the configuration space. We fix an auxiliary
Riemannian metric on $\Sigma$ and for every point
$x\in \Sigma$ we fix a geodesic $\gamma_{i,x}$ of minimal length from
$z_i$ to $x$. We denote by $\overline{\gamma}_{i,x}$ the reversed geodesic.

Let $h_t\in \OP{Ham}(\Sigma)$ be a Hamiltonian isotopy from the identity
to a diffeomorphism $h=h_1\in \OP{Ham}(\Sigma)$ and let
$(x_1,\ldots,x_n)\in \B X_n(\Sigma)$ be a point in the configuration space.
Let $\gamma(h,x_1,\ldots,x_n)\in \B P_n(\Sigma)$ be the braid
represented by the loop $[0,3]\to \B X_n(\Sigma)$ defined by
$$
s \mapsto
\begin{cases}
(\gamma_{1,x_1}(s),\ldots,\gamma_{n,x_n}(s)) &\text{ for } 0\leq s\leq 1\\
(h_{s-1}(x_1),\ldots,h_{s-1}(x_n)) &\text{ for } 1\leq s \leq 2\\
(\overline{\gamma}_{1,h(x_1)}(s-2),\ldots,\overline{\gamma}_{n,h(x_n)}(s-2))
&\text{ for } 2\leq s \leq 3.
\end{cases}
$$
This braid is only well defined on a set of points $(x_1,\ldots,x_n)$
of full measure.

Let $\psi\colon \B P_n(\Sigma)\to \B R$ be a homogeneous quasimorphism
and let
$$
\C G\colon \B Q(\B P_n(\Sigma))\to \B Q(\OP{Ham}(\Sigma))
$$
be defined by
$$
\C G(\psi)(h):=\lim_{p\to \infty}\frac{1}{p}
\int_{\B X_n(\Sigma)}\psi(\gamma(h^p,x_1,\ldots,x_n))\,dx_1\wedge \dots \wedge dx_n.
$$
The fact that the value $\C G(\psi)$ is a homogeneous quasimorphism, when
$\psi$ is a signature quasimorphism, was first proved by Gambaudo and Ghys
\cite{MR2104597} for the case of the disc and the sphere and later extended to
all $\psi$ and all surfaces by Brandenbursky \cite{MR3044593}. The map $\C G$
is linear and, in general, has a nontrivial kernel. In Section \ref{S:injectivity}
we prove that $\C G$ is injective on a certain subspace of
$\B Q(\B P_2(\B T^2))$.

\subsection{Braid groups on two strings}
We use the following presentations of the braid groups the free group:
\begin{align*}
\B B_2(\B T^2)&=\langle a_1,a_2,b_1,b_2,\sigma \,|\, \text{\tt Relations}\,\rangle\\
\B P_2(\B T^2)&=\langle a_1,a_2,b_1,b_2,\sigma^2 \,|\, \text{\tt Relations}\,\rangle\\
\B F_2 &=\langle a,b\rangle.
\end{align*}
We omit the relations because they are quite complicated and we don't need them
in our discussions. They can be found in \cite[Theorem 1.3 and 1.4]{MR0268889}.
The generators are presented in Figure \ref{F:generators}, which should be understood
as follows.
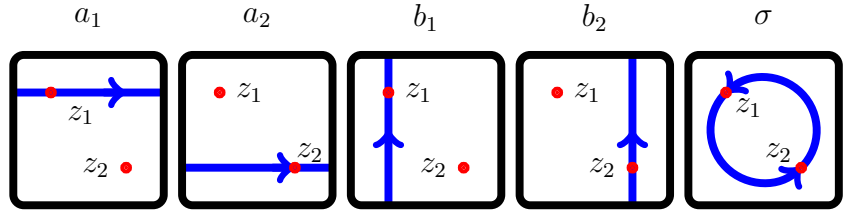
\begin{figure}[h]
\begin{tikzpicture}[line width=3pt, rounded corners, scale=0.1]
\draw[blue,->] (0,15) -- (15,15);
\draw[blue] (12,15) -- (20,15);
\filldraw[red]
(5,15) circle (7pt)
(15,5) circle (7pt);
\draw (9,12) node {$z_1$};
\draw (11,5) node {$z_2$};
\draw[rounded corners] (0,0) rectangle (20,20);
\draw (10,25) node {$a_1$};
\end{tikzpicture}
\begin{tikzpicture}[line width=3pt, rounded corners, scale=0.1]
\draw[blue,->] (0,5) -- (15,5);
\draw[blue] (12,5) -- (20,5);
\filldraw[red]
(5,15) circle (7pt)
(15,5) circle (7pt);
\draw (9,15) node {$z_1$};
\draw (17,7) node {$z_2$};
\draw[rounded corners] (0,0) rectangle (20,20);
\draw (10,25) node {$a_2$};
\end{tikzpicture}
\begin{tikzpicture}[line width=3pt, rounded corners, scale=0.1]
\draw[blue,->] (5,0) -- (5,10);
\draw[blue] (5,9) -- (5,20);
\filldraw[red]
(5,15) circle (7pt)
(15,5) circle (7pt);
\draw (9,15) node {$z_1$};
\draw (11,5) node {$z_2$};
\draw[rounded corners] (0,0) rectangle (20,20);
\draw (10,25) node {$b_1$};
\end{tikzpicture}
\begin{tikzpicture}[line width=3pt, rounded corners, scale=0.1]
\draw[blue,->] (15,0) -- (15,10);
\draw[blue] (15,9) -- (15,20);
\filldraw[red]
(5,15) circle (7pt)
(15,5) circle (7pt);
\draw (9,15) node {$z_1$};
\draw (11,5) node {$z_2$};
\draw[rounded corners] (0,0) rectangle (20,20);
\draw (10,25) node {$b_2$};
\end{tikzpicture}
\begin{tikzpicture}[line width=3pt, rounded corners, scale=0.1]
\draw[blue,->] (15,5) arc (-45:135:200pt);
\draw[blue,->] (5,15) arc (135:315:200pt);
\filldraw[red]
(5,15) circle (7pt)
(15,5) circle (7pt);
\draw (8,13) node {$z_1$};
\draw (12,7) node {$z_2$};
\draw[rounded corners] (0,0) rectangle (20,20);
\draw (10,25) node {$\sigma$};
\end{tikzpicture}
\caption{Generators of the braid group $\B B_2(\B T^2)$.}
\label{F:generators}
\end{figure}
For example, the generator $a_1$ is a braid in which the first
basepoint traces the horizontal loop going once around the torus and
the second basepoint remains still.

\begin{lemma}\label{L:P2(T)}
The map $\Phi\colon \B X_2(\B T^2)\to \B T^2\setminus\{0\}\times \B T^2$ defined
by $$
\Phi(x,y):=(x-y,y)
$$
is a diffeomorphism. It induces an isomorphism
$$
\Phi_*\colon \B P_2(\B T) \to \B F_2\times \B Z^2,
$$
which on the generators is given by
\begin{align*}
a_1&\mapsto (a,(0,0))\qquad
a_2\mapsto (a^{-1},(1,0))\qquad
\sigma^2 &\mapsto ([a,b],(0,0))\\
b_1&\mapsto (b,(0,0))\qquad
b_2\mapsto (b^{-1},(0,1)).
\end{align*}
\end{lemma}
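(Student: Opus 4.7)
The plan is first to verify that $\Phi$ is a diffeomorphism, and then to read off the images of the generators under the induced isomorphism on fundamental groups.

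For the first part, $\Phi(x,y)=(x-y,y)$ has an explicit smooth inverse $(u,y)\mapsto(u+y,y)$ defined on all of $\B T^2\times \B T^2$, so $\Phi$ is a diffeomorphism of the full product. Its restriction to $\B X_2(\B T^2)=\{(x,y)\mid x\neq y\}$ has image exactly $(\B T^2\setminus\{0\})\times \B T^2$, because $x-y=0$ in $\B T^2$ iff $x=y$. This establishes the diffeomorphism claim.

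Because $\B T^2\setminus\{0\}$ deformation retracts onto a wedge of two circles (the horizontal circle representing the generator $a$ and the vertical one representing $b$), the target has fundamental group $\B F_2\times\B Z^2$, and $\Phi_*$ therefore induces an isomorphism $\B P_2(\B T^2)\cong \B F_2\times \B Z^2$. To compute this isomorphism on generators I would trace each loop through $\Phi$. For $a_1$ only the first point moves, traversing the horizontal circle, so the $\B T^2$-coordinate $y$ is constant while $x-y$ traces a horizontal loop in $\B T^2\setminus\{0\}$, giving $(a,(0,0))$; the case of $b_1$ is identical with output $(b,(0,0))$. For $a_2$ the roles are reversed: $y$ winds once horizontally, contributing $(1,0)\in\B Z^2$, while $x-y=z_1-y$ traces the horizontal circle in the opposite direction, yielding $(a^{-1},(1,0))$; analogously $b_2\mapsto(b^{-1},(0,1))$.

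The one nontrivial computation is the image of $\sigma^2$. This is the full twist in which each basepoint traces a small arc around the other and returns to itself, so the $y$-coordinate bounds a null-homotopic loop in $\B T^2$, while $x-y$ traces a small loop around the puncture $0\in\B T^2\setminus\{0\}$. The conceptual input is the standard identification of such a loop with the boundary word $aba^{-1}b^{-1}$ of the fundamental polygon of the torus under the retraction to the wedge of two circles; this gives $\sigma^2\mapsto([a,b],(0,0))$. Once this step is granted, the rest is bookkeeping and the claimed presentation follows.
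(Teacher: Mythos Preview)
Your argument is correct and follows the same approach as the paper: verify the diffeomorphism directly and then trace each generating loop through $\Phi$ to read off its image in $\B F_2\times\B Z^2$. In fact your write-up is slightly more complete than the paper's, which only illustrates the cases $b_1,b_2$ via figures and does not spell out the $\sigma^2\mapsto([a,b],(0,0))$ computation that you handle by identifying the loop around the puncture with the boundary word of the fundamental polygon.
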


\begin{proof}
The fact that $\Phi$ is a diffeomorphism is straightforward.
Let $\pi\colon \B P_2(\B T^2)\to \B F_2$ denote the projection onto
the free factor. The following figures describe the value of
$\pi$ on a generator.

\begin{figure}[h]
\begin{tikzpicture}[line width=3pt, rounded corners, scale=0.14]
\draw[blue,->] (5,0) -- (5,10);
\draw[blue] (5,9) -- (5,20);
\filldraw[red]
(5,15) circle (7pt)
(5,7) circle (5pt)
(15,5) circle (7pt);
\draw (12,15) node {$x=x_0$};
\draw (8,7) node {$x_t$};
\draw (17,5) node {$y$};
\draw[rounded corners] (0,0) rectangle (20,20);
\draw[|->] (25,10) -- (35,10);
\draw[green,->] (50,0) -- (50,15);
\draw[green] (50,9) -- (50,20);
\filldraw[red]
(50,10) circle (7pt)
(50,2) circle (5pt);
\draw (55,10) node {$x-y$};
\draw (55,2) node {$x_t-y$};
\draw[rounded corners] (40,0) rectangle (60,20);
\filldraw[white]
(40,0) circle (20pt);
\filldraw[white]
(40,20) circle (20pt);
\filldraw[white]
(60,0) circle (20pt);
\filldraw[white]
(60,20) circle (20pt);
\end{tikzpicture}
\caption{The image $\pi(b_1)=b\in \B F_2$.}
\label{F:a1}
\end{figure}
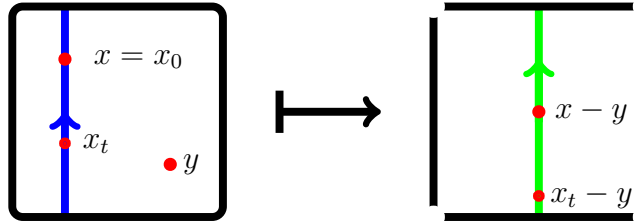

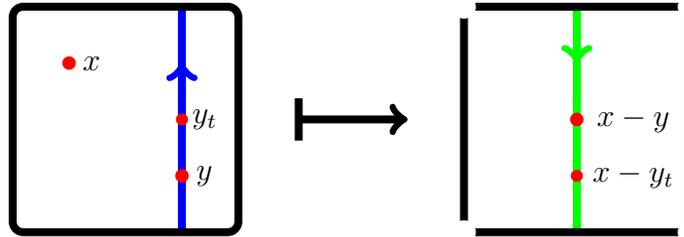
\begin{figure}[h]
\begin{tikzpicture}[line width=3pt, rounded corners, scale=0.15]
\draw[blue,->] (15,0) -- (15,15);
\draw[blue] (15,9) -- (15,20);
\filldraw[red]
(15,5) circle (7pt)
(15,10) circle (5pt)
(5,15) circle (7pt);
\draw (7,15) node {$x$};
\draw (17,10) node {$y_t$};
\draw (17,5) node {$y$};
\draw[rounded corners] (0,0) rectangle (20,20);
\draw[|->] (25,10) -- (35,10);
\draw[green] (50,0) -- (50,16);
\draw[green,<-] (50,15) -- (50,20);
\filldraw[red]
(50,10) circle (7pt)
(50,5) circle (5pt);
\draw (55,10) node {$x-y$};
\draw (55,5) node {$x-y_t$};
\draw[rounded corners] (40,0) rectangle (60,20);
\filldraw[white]
(40,0) circle (20pt);
\filldraw[white]
(40,20) circle (20pt);
\filldraw[white]
(60,0) circle (20pt);
\filldraw[white]
(60,20) circle (20pt);
\end{tikzpicture}
\caption{The image $\pi(b_2)=b^{-1}\in \B F_2$.}
\label{F:a2}
\end{figure}
The left hand side of the figure represents
the image of the generator $b_1$ (blue) with respect to the projection
onto the torus (the black square). The generator $b_1$ moves the
point $x$ along the meridian of the torus and keeps the point
$y$ fixed. The generator $b_2$ keeps the point $x$ fixed and moves
the point $y$ along the meridian of the torus. The right hand sides
of the figures present the free part of $\Phi_*(b_i)$ as loops
on the punctured torus. The abelian parts are straightforward to see.
The values on the generators $a_i$ are computed analogously.
\end{proof}

It follows from the above proposition that
the quotient of the pure braid group $\B P_2(\B T^2)$ by its center is isomorphic
to the free group $\B F_2$. Let
$\pi\colon \B P_2(\B T^2) \to \B F_2$
denote the projection. It induces the linear map
$$
\pi^*\colon \B Q(\B F_2)\to \B Q(\B P_2(\B T^2)).
$$
In the second part of the paper we will need quasimorphisms on the full braid
group. In what follows we identify those quasimorphisms $\psi$ on the free groups
such that $\pi^*\psi$ extends to the full braid group.

\begin{definition}\label{D:palindrome}
A word $w$ in $\B F_2=\langle a,b\rangle$ is called a {\em palindrome} if $w$
is equal to itself read from right to left. Let ${\tt PAL}\subset \B F_2$
denote the set of all palindromes.
\end{definition}

\begin{proposition}\label{P:palindromes}
A quasimorphism $\psi\in \B Q(\B F_2)$ vanishes on palindromes if and only if
the quasimorphism $\pi^*\psi$ extends to $\B B_2(\B T^2)$. In particular,
we get a linear map
$$
\B Q(\B F_2,{\tt PAL})\to \B Q(\B B_2(\B T^2)).
$$
\end{proposition}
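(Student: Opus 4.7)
The plan is to establish the equivalence in three steps: reducing extensibility to an invariance condition on $\psi$, identifying that invariance as $\sigma_{-}$-invariance (where $\sigma_{-} = \sigma_a \circ \sigma_b$), and showing this is equivalent to vanishing on palindromes.

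\emph{Step 1.} Since $\B P_2(\B T^2)$ is normal of index $2$ in $\B B_2(\B T^2)$, a homogeneous quasimorphism on it extends (necessarily uniquely, via $g \mapsto \tfrac{1}{2}\pi^{*}\psi(g^{2})$) to a homogeneous quasimorphism on $\B B_2(\B T^2)$ if and only if it is invariant under conjugation by a coset representative $\sigma \in \B B_2(\B T^2) \setminus \B P_2(\B T^2)$, since homogeneous quasimorphisms are automatically conjugation-invariant within $\B P_2(\B T^2)$. Taking $\sigma$ to be the half-twist, this conjugation preserves the center $\B Z^{2} \subset \B P_2(\B T^2)$ and hence descends to an automorphism $\hat\sigma$ of $\B F_2 = \B P_2(\B T^2)/\B Z^{2}$; the extension condition becomes $\hat\sigma$-invariance of $\psi$.

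\emph{Step 2.} Next I would identify $\hat\sigma$ up to inner automorphism of $\B F_2$. Conjugation by any element of $\B B_2(\B T^2)$ acts on $\B P_2(\B T^2)^{\mathrm{ab}}$ by permuting strand indices; for the half-twist this exchanges $[a_1]\leftrightarrow[a_2]$ and $[b_1]\leftrightarrow[b_2]$. Under $\Phi_*$ (Lemma \ref{L:P2(T)}), the generators $[a_i]$ and $[b_i]$ project to $\pm a$ and $\pm b$ in $H_1(\B F_2) = \B Z^{2}$, so $\hat\sigma$ acts as $-I$ on $H_1(\B F_2)$. Via the identification $\OP{Out}(\B F_2) = GL_2(\B Z)$ (acting on $H_1(\B F_2)$), it follows that $\hat\sigma$ and $\sigma_{-}$ coincide in $\OP{Out}(\B F_2)$, i.e., they differ by an inner automorphism of $\B F_2$. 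By Lemma \ref{L:invariant-homogeneous} combined with conjugation-invariance of homogeneous quasimorphisms, $\hat\sigma$-invariance of $\psi$ is then equivalent to $\sigma_{-}$-invariance.

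\emph{Step 3.} Finally I would prove that $\sigma_{-}$-invariance is equivalent to vanishing on palindromes. In the forward direction, a palindrome $w = w_1 \cdots w_n$ with $w_i = w_{n+1-i}$ satisfies $\sigma_{-}(w) = w_1^{-1}\cdots w_n^{-1} = w_n^{-1}\cdots w_1^{-1} = w^{-1}$, so $\psi(w) = \psi(\sigma_{-}(w)) = -\psi(w)$ and hence $\psi(w) = 0$. For the converse, for any $w \in \B F_2$ the product $w \cdot w^{\text{rev}}$ (with $w^{\text{rev}}$ the word read right-to-left) is manifestly a palindrome, so $\psi(w \, w^{\text{rev}}) = 0$. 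Applying the quasimorphism defect bound to $w^{n}(w^{n})^{\text{rev}} = w^{n}(w^{\text{rev}})^{n}$, dividing by $n$, and taking the limit yields $\psi(w^{\text{rev}}) = -\psi(w)$; combined with the identity $\sigma_{-}(w) = (w^{-1})^{\text{rev}}$, this gives $\psi(\sigma_{-}(w)) = -\psi(w^{-1}) = \psi(w)$, so $\psi$ is $\sigma_{-}$-invariant.

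The main obstacle is Step 2. A subtle point there is that $\hat\sigma$ cannot equal $\sigma_{-}$ exactly in $\OP{Aut}(\B F_2)$, since $\hat\sigma^{2}$ is conjugation by $\pi(\sigma^{2}) = [a,b]$ while $\sigma_{-}^{2}$ is the identity; it is only homogeneity of $\psi$ that allows passage to $\OP{Out}(\B F_2) = GL_2(\B Z)$, where the abelianization computation identifies $\hat\sigma$ with $-I$.
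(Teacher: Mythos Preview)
Your proof is correct and follows the same three-step skeleton as the paper: reduce extendability to invariance under conjugation by the half-twist $\sigma$, identify the induced automorphism of $\B F_2$ with the letter-inverting map, and show that invariance is equivalent to vanishing on palindromes.

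The one substantive difference is in Step~2. The paper simply asserts that the conjugation action of $\sigma$ descends to the automorphism $a\mapsto a^{-1}$, $b\mapsto b^{-1}$ of $\B F_2$. You instead pass to $\OP{Out}(\B F_2)\cong \OP{GL}(2,\B Z)$ and check on abelianization that $\hat\sigma$ and $\sigma_{-}$ both map to $-I$, hence differ by an inner automorphism; you then use conjugation-invariance of homogeneous quasimorphisms to conclude. Your route is in fact more careful: as you observe, $\hat\sigma^2$ is conjugation by $\pi(\sigma^2)=[a,b]\neq 1$ while $\sigma_{-}^2=\OP{id}$, so $\hat\sigma$ cannot literally equal $\sigma_{-}$ in $\OP{Aut}(\B F_2)$, and the paper's assertion is at best imprecise. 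The paper's argument is salvaged by exactly the observation you make, so your detour through $\OP{Out}(\B F_2)$ buys a genuinely cleaner justification at essentially no extra cost.

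In Step~3 the two arguments are minor variants of each other: the paper uses that $\sigma_{-}(g)g^{-1}$ is always a palindrome, whereas you use that $w\cdot w^{\mathrm{rev}}$ is; since $\sigma_{-}(w)=(w^{\mathrm{rev}})^{-1}$, these are the same observation up to taking inverses.
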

\begin{proof}
The pure braid group is a normal subgroup of finite index in the full braid
group. According to \cite[Lemma 4.2]{MR2549454}, a homogeneous quasimorphism
$\psi\colon H\to \B R$ on a finite index normal subgroup $H\triangleleft G$
extends to the group $G$ if and only if for every $h\in H$ and every $g\in G$
we have that $\psi(ghg^{-1})=\psi(h)$.

It follows that a quasimorphism on the pure braid group extends
if and only if it is invariant under the automorphism defined by the
conjugation by $\sigma$. Since $\B F_2$ is the quotient of the pure
braid group by the center, the conjugation by $\sigma$ descends
to an automorphism of the free group. By abuse of notation we
denote it by $\sigma\in \OP{Aut}(\B F_2)$. Observe that $\sigma$
is defined by specifying its values on generators as
$\sigma(a)=a^{-1}$ and $\sigma(b)=b^{-1}$.

We conclude that
if $\psi \in \B Q(\B F_2)$ then the quasimorphism $\pi^*\psi$
extends to the full braid group if and only if $\psi$ is
invariant under $\sigma$. That is,
$\psi(\sigma(g))=\psi(g)$ for every $g\in \B F_2$.

Observe that $\sigma(g)=g^{-1}$ if and only if $g$ is a palindrome.
In particular, every element of the form $\sigma(g)g^{-1}$ is a
palindrome.

If $\psi$ vanishes on palindromes then $\psi(\sigma(g)g^{-1})=0$ for
every $g$. The following computation shows that $\psi$ is invariant
under $\sigma$. Let $w\in \B F_2$ be any element.
\begin{align*}
|\psi(\sigma(w))-\psi(w)|&=\frac{1}{n}|\psi(\sigma(w^n))-\psi(w^n)|\\
&\leq \frac{1}{n}\left(|\psi(\sigma(w^n)w^{-n})| + D_\psi\right)
=\frac{D_\psi}{n}.
\end{align*}

Conversely, if $\psi$ is invariant with respect to $\sigma$ then if
$g\in \B F_2$ is a palindrome we get that
$$
\psi(g^{-1})=\psi(\sigma(g)) = \psi(g)
$$
and by homogeneity we obtain that $\psi(g)=0$.
\end{proof}

\section{The injectivity theorem}\label{S:injectivity}
Let $\sigma_a,\sigma_b\in \OP{Aut}(\B F_2)$ be automorphisms
defined by
\begin{align*}
\sigma_a(a)&=a^{-1},\quad \sigma_a(b)=b\\
\sigma_b(a)&=a,\qquad \sigma_b(b)=b^{-1}.
\end{align*}
They generate an action of $\B Z/2\times \B Z/2$ on the
free group $\B F_2$.

\begin{proposition}\label{P:injectivity}
Let $\B Q(\B F_2;\B Z/2\times \B Z/2)\subset \B Q(\B F_2)$ be the space
of homogeneous quasimorphisms which are invariant under the
above action. The composition
$$
\B Q(\B F_2,\B Z/2\times \B Z/2)\stackrel{\pi^*}\to \B Q(\B P_2(\B T^2))
\stackrel{\C G}\to \B Q(\OP{Ham}(\B T^2))
$$
is injective.
\end{proposition}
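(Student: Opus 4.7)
The plan is to show injectivity by exhibiting, for each nonzero $\psi\in \B Q(\B F_2;\B Z/2\times \B Z/2)$, an explicit Hamiltonian $h\in \OP{Ham}(\B T^2)$ with $\C G(\pi^*\psi)(h)\neq 0$. The natural candidates are the \emph{eggbeater} maps mentioned in the remark after the theorem: time-one maps of the form $h=\phi\circ \phi'$, where $\phi$ is a shear Hamiltonian supported in a horizontal annulus $A\subset \B T^2$ and $\phi'$ a shear Hamiltonian supported in a vertical annulus $A'\subset \B T^2$, with slopes (rotation numbers) $\lambda$ and $\mu$ chosen sufficiently large compared to the widths of $A$ and $A'$. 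The geometric idea is that under the diffeomorphism $\Phi$ of Lemma \ref{L:P2(T)}, iterating $h$ sends a pair $(x_1,x_2)$ to a braid whose projection to $\B F_2$ is a long reduced word in $a,b$ that can be read off from the combinatorics of the orbit of $x_1-x_2$ through the strip decomposition $A\cup A'$.

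First, I would identify a region $U\subset \B X_2(\B T^2)$ of positive measure on which, for every integer $p\geq 1$, the projected braid $\pi(\gamma(h^p,x_1,x_2))$ equals $w(x_1,x_2)^p\cdot r_p(x_1,x_2)$ where $w(x_1,x_2)\in \B F_2$ is locally constant in $(x_1,x_2)$ and the reduced length of $r_p$ is bounded independently of $p$. Concretely, $U$ consists of pairs whose difference $x_1-x_2\in \B T^2\setminus \{0\}$ lies in $A\cap A'$, and $w$ records the reading of the $h$-orbit of $x_1-x_2$ as a word in the generators $a$ (a horizontal trip through $A$) and $b$ (a vertical trip through $A'$). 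Second, by the limit definition of $\C G$ and the homogeneity of $\psi$, the bounded-length tails contribute $O(1/p)$ and disappear, yielding the formula
$$
\C G(\pi^*\psi)(h)=\int_{\B X_2(\B T^2)}\psi(w(x_1,x_2))\,dx_1\wedge dx_2.
$$
Outside $U$ the integrand is either zero (when the braid projection is trivial) or has bounded absolute value, so the essential content of the integral is a finite sum $\sum_i \OP{area}(U_i)\,\psi(w_i)$ over the pieces of $U$ on which $w$ is constant.

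Third, I would argue that by varying the parameters $\lambda,\mu$ and the placements of $A,A'$, the words $w_i$ that arise cover a rich enough set of elements of $\B F_2$ to detect any nonzero $\psi\in \B Q(\B F_2;\B Z/2\times \B Z/2)$. The natural candidates are words of the form $(a^{m}b^{n})^{\pm 1}$ and their cyclic variants; the $\B Z/2\times \B Z/2$ invariance of $\psi$ means that the four pieces of $U$ obtained by swapping the sign of $x_1-x_2$ or reflecting across the axes contribute equal amounts rather than cancelling, so the integral reduces to a positive constant multiple of $\psi(a^m b^n)$ for the specific eggbeater chosen. Since a homogeneous quasimorphism on $\B F_2$ that vanishes on all elements $a^m b^n$ (and their conjugates under $\B Z/2\times \B Z/2$) must vanish identically, scanning through $(m,n)$ finishes the job.

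The main obstacle is step two: rigorously tracking the word $w(x_1,x_2)$ as a function of the position. One has to verify that the auxiliary geodesic segments $\gamma_{i,x_i}$ used in the Gambaudo--Ghys construction contribute only bounded distortion to the reduced word, so that after passing through the isomorphism $\Phi_*$ of Lemma \ref{L:P2(T)} the projection to $\B F_2$ really is $w(x_1,x_2)^p$ up to bounded error. This is a careful but essentially bookkeeping argument once the eggbeater dynamics are set up, and it is the part of the proof where the explicit form of $\Phi_*$ on the generators (rather than any abstract property of $\C G$) is used decisively.
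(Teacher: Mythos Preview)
Your overall strategy---pick explicit shear Hamiltonians and compute the Gambaudo--Ghys integral---is the same as the paper's, but the restriction to maps of the form $\phi\circ\phi'$ is fatal, and the sentence ``a homogeneous quasimorphism on $\B F_2$ that vanishes on all elements $a^m b^n$ \ldots\ must vanish identically'' is simply false. The paper's own quasimorphisms $\overline{\psi_{c_m}}$ from Proposition~\ref{P:Q-inf-dim} are counterexamples: for any $m\geq 2$ the sequence of syllable exponents of $(a^ib^j)^p$ is periodic of period~$2$, and one checks directly that $\psi_{c_m}\bigl((a^ib^j)^p\bigr)$ is bounded in $p$, so $\overline{\psi_{c_m}}(a^ib^j)=0$ for all $i,j$; yet $\overline{\psi_{c_m}}$ is nonzero and $\B Z/2\times\B Z/2$--invariant. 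Thus no amount of varying the rotation numbers $\lambda,\mu$ of a single product $\phi\circ\phi'$ will detect these $\psi$.

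The paper repairs this by allowing an \emph{arbitrary} word: given $\psi$ nonzero and $w(a,b)\in\B F_2$ with $\psi(w(a,b))>0$, it takes $g=w(v,h)$, a composition of the basic shears following the pattern of $w$. On the dominant region $S_1\times S_2$ (one point in the intersection of the two annuli, the other outside both) the projected braid is exactly one of $w(a^{\pm 1},b^{\pm 1})$, and the $\B Z/2\times\B Z/2$--invariance is what makes all four contributions equal to $\psi(w)$ rather than cancelling. A second point you glossed over: the paper arranges that $v$ and $h$ are the identity on their sub-annuli up to an $\epsilon$--thin set, so $g$ fixes both points of a generic pair and Lemma~\ref{L:h(x)=x} gives $\gamma(g^p,x,y)=\gamma(g,x,y)^p$ exactly on a set of nearly full measure---this is what replaces your appeal to ``bounded-length tails $r_p$'', which for a genuine eggbeater with large rotation number is not obvious since the dynamics on $A\cap A'$ are chaotic. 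The remaining regions contribute terms of order $s^3$ or $s^4$ against the $s^2$ main term, and shrinking $s$ finishes the argument.
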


\begin{lemma}\label{L:h(x)=x}
Let $h\in \OP{Ham}(\B T^2)$ be a Hamiltonian diffeomorphism and let
$x,y\in \B T^2$ be two points.  If $h(x)=x$ and $h(y)=y$ then
$$
\gamma(h^p,x,y)=\gamma(h,x,y)^p
$$
\end{lemma}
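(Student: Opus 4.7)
The plan is to unwind the definition of $\gamma$ in the configuration space and exploit the fact that the fixed-point hypothesis turns the middle piece of the defining loop into a loop based at $(x,y)$. Once that is observed, the identity becomes a standard loop-concatenation calculation.

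More concretely, write $\gamma(h,x,y)$ as the concatenation of three paths in $\B X_2(\B T^2)$: first the ``geodesic segment'' $\gamma_0 = (\gamma_{1,x},\gamma_{2,y})$ from the basepoint $(z_1,z_2)$ to $(x,y)$; then the ``isotopy segment'' $\beta(x,y) := s\mapsto (h_s(x),h_s(y))$ for $s\in[0,1]$; and finally the reverse geodesic segment $\overline{\gamma_1} = (\overline{\gamma}_{1,h(x)},\overline{\gamma}_{2,h(y)})$. Because $h(x)=x$ and $h(y)=y$ we have $\overline{\gamma_1}=\overline{\gamma_0}$, so $\beta(x,y)$ is a loop in $\B X_2(\B T^2)$ based at $(x,y)$, and
\[
\gamma(h,x,y)=\gamma_0 \cdot \beta(x,y) \cdot \overline{\gamma_0}.
\]

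Next I would fix the convenient isotopy from the identity to $h^p$ obtained by concatenating $p$ reparametrized copies of $\{h_t\}$. With this choice the isotopy segment of $\gamma(h^p,x,y)$ is precisely $\beta(x,y)^p$, the $p$-fold iterate of the loop $\beta(x,y)$ at $(x,y)$; the geodesic segments remain $\gamma_0$ and $\overline{\gamma_0}$ since $h^p$ also fixes $x$ and $y$. Hence
\[
\gamma(h^p,x,y)=\gamma_0\cdot \beta(x,y)^p\cdot \overline{\gamma_0}.
\]
On the other hand,
\[
\gamma(h,x,y)^p=\bigl(\gamma_0\cdot\beta(x,y)\cdot\overline{\gamma_0}\bigr)^p,
\]
and inserting the null-homotopic pieces $\gamma_0\cdot\overline{\gamma_0}\simeq \mathrm{const}_{(z_1,z_2)}$ between consecutive copies (equivalently, deleting the $p-1$ null-homotopic middle pieces $\overline{\gamma_0}\cdot\gamma_0$) yields exactly $\gamma_0\cdot\beta(x,y)^p\cdot\overline{\gamma_0}$. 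This gives an equality of homotopy classes in $\B P_2(\B T^2)$, which is the desired conclusion.

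The only subtle point is the choice of isotopy from the identity to $h^p$: the braid $\gamma(h^p,x,y)$ in general depends on this choice, but we are free to fix it to be the concatenated isotopy described above, and the statement of the lemma is with respect to this natural choice. No analytical obstacle arises; the content is purely the observation that fixing $x$ and $y$ promotes the isotopy segment to an honest based loop so that exponentiation in $\B P_2(\B T^2)$ commutes with iteration of~$h$.
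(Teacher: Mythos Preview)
Your proof is correct and is precisely the unwinding the paper has in mind: the paper's own proof consists of the single sentence ``Immediate from the definition of $\gamma(-,-,-)$,'' and what you have written is exactly that immediacy made explicit. One minor remark: your caveat about isotopy dependence is overly cautious in this setting, since for Hamiltonian isotopies the braid $\gamma(h,x,y)$ is well defined up to homotopy (cf.\ the discussion in the introduction and Lemma~\ref{L:loop}); but fixing the concatenated isotopy as you do is in any case the natural and harmless choice.
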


\begin{proof}
Immediate from the definition of $\gamma(-,-,-).$
\end{proof}

\begin{proof}
Let $\psi\in \B Q(\B F_2,\B Z/2\times \B Z/2)$. We shall prove
that $\C G(\pi^*\psi)\neq~0$ in $\B Q(\OP{Ham}(\B T^2))$
by constructing explicit examples
of Hamiltonian diffeomorphisms on which  $\C G(\pi^*\psi)$
evaluates nontrivially.

Let $s\in (0,\frac{1}{4})$ and let $0 <\epsilon <10^{-3}s$.
Let $F_s\colon [0,1]\to \B R$ be a smooth function with the following properties:
\begin{enumerate}
\item $F_s(x)=0$ for $x\in \left[0,\frac{1}{4}-s-\epsilon\right]\cup
\left[\frac{1}{4}+s+\epsilon,1\right]$,
\item $F_s'(x)=1$ for $x\in \left[\frac{1}{4}-s,\frac{1}{4}-\epsilon\right]$,
\item $F_s\left(\frac{1}{4}-x\right)=F_s\left(\frac{1}{4}+x\right)$
for $x\in \left[0,\frac{1}{4}\right]$,
\end{enumerate}
see Figure \ref{F:hamiltonian}.

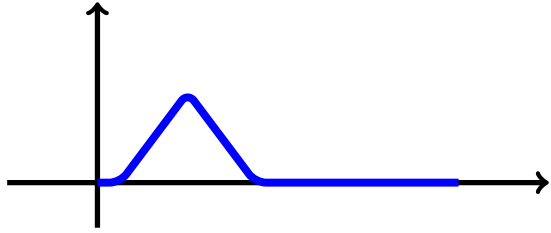
\begin{figure}[h]
\begin{tikzpicture}[line width=2pt, rounded corners, scale=0.3]
\draw[->] (-4,0) -- (20,0);
\draw[->] (0,-2) -- (0,8);
\draw[blue, line width=3pt] (0,0) -- (1,0) -- (4,4)  -- (7,0) -- (16,0);
\end{tikzpicture}
\caption{The function $F_s$.}
\label{F:hamiltonian}
\end{figure}

Let $H,V\colon \B T^2\to \B R$ be defined by
$H(x,y)=F_s(1-y)$ and $V(x,y)=F_s(x)$ respectively.
Let $h_t,v_t\in \OP{Ham}(\B T^2)$ be the corresponding Hamiltonian flows
and let $t_0>0$ be a real number chosen so that the restriction of
$v_{t_0}$ to the annulus $\left[\frac{1}{4}-s,\frac{1}{4}-\epsilon\right]\times \B S^1$
is the identity. Let $h:=h_{t_0}$ and $v:=v_{t_0}$.
The support of $v$ is marked green and the support of $h$ is marked
blue in Figure \ref{F:hv} below. The isotopy $\{v_t\}$ is supported
between the green lines and the support of the isotopy $\{h_t\}$ is
between the blue lines.

\begin{figure}[h]
\begin{tikzpicture}[line width=2pt, scale=0.5]
\draw[line width=.2pt, gray!20] (0,0) grid (16,16);
\draw[green] (2,0) -- (2,16);
\draw[green] (6,16) -- (6,0);
\draw (3,8) node {$\bf S_3$};
\draw[green, <-] (3,3) -- (3,7);
\draw[green, <-] (5,6) -- (5,2);
\draw[green, line width=1pt, dashed] (4,0) -- (4,16);
\draw[blue] (0,14) -- (16,14)  (16,10) -- (0,10);
\draw[blue, line width=1pt, dashed] (0,12) -- (16,12);
\draw (8,13) node {$\bf S_4$};
\draw[blue,->] (10,13) -- (14,13);
\draw[blue,->] (13,11) -- (9,11);
\draw[red] (2,14) -- (2,10) -- (6,10) -- (6,14) -- (2,14)
(2,12) -- (6,12) (4,10) -- (4,14);
\draw (3,13) node {$\bf S_1$};
\draw (8,8) node {$\bf S_2$};
\draw[line width=3pt] (0,0) -- (0,16) -- (16,16) -- (16,0) -- (0,0) -- (0,1);
\filldraw (4,12) circle (4pt) (12,4) circle (4pt);
\draw (4.5,12.5) node {$z_1$};
\draw (12.5,4.5) node {$z_2$};
\end{tikzpicture}
\caption{Diffeomorphisms $h$ and $v$.}
\label{F:hv}
\end{figure}
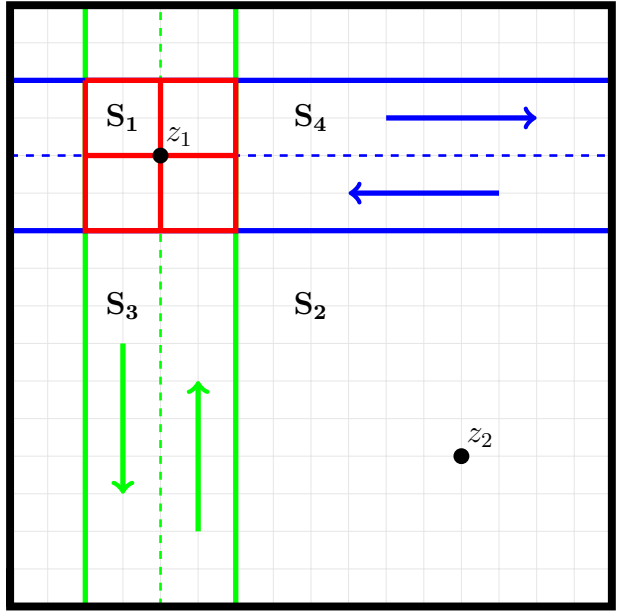

Define the following pairwise disjoint open subsets of the torus:
\begin{itemize}
\item
$S_1:=\left (\frac{1}{4}-s,\frac{1}{4}+s\right) \times
\left (\frac{3}{4}-s,\frac{3}{4}+s\right)$ -- the red square,
\item
$S_2:=\B T^2 \setminus \left ( \left[\frac{1}{4}-s,\frac{1}{4}+s\right]
\times [0,1] \cup [0,1]\times \left [\frac{3}{4}-s,\frac{3}{4}+s\right]\right )$
-- the complement of the union of the blue and green annuli,
\item
$S_3:= \left(\frac{1}{4}-s,\frac{1}{4}+s\right) \times [0,1]
\setminus \overline{S_1}$ -- the green annulus minus the closure of the red square,
\item
$S_4:= [0,1]\times \left(\frac{1}{4}-s,\frac{1}{4}+s\right)
\setminus \overline{S_1}$
-- the blue annulus minus the closure of the red square.
\end{itemize}

Let $\psi\colon \B F_2\to \B R$ be a nontrivial homogeneous quasimorphism
invariant under the action of $\B Z/2\times \B Z/2$ and
let $w(a,b)\in \B F_2$ be an element such that $\psi(w(a,b)) > 0$. Let
$g=w(v,h)\in \OP{Ham}(\B T^2)$.  Now we investigate the value of the integral
$$
\int_{\B X_2(\B T^2)}\psi(\pi(\gamma(g,x,y)))\,dx\wedge dy
$$
by decomposing it into a sum of integral over subsets of the
configuration space. First observe that the subset
$$
\bigcup_{i\neq j}S_i\times S_j \cup \bigcup_{i}\B X_2(S_i)\subset \B X_2(\B T^2)
$$
is open and dense so we have
\begin{align*}
\int_{\B X_2(\B T^2)}\psi(\pi(\gamma(g,x,y)))\,dx\wedge dy &=
\sum_{i\neq j} \int_{S_i\times S_j} \psi(\pi(\gamma(g,x,y)))\,dx\wedge dy\\
&+ \sum_i \int_{\B X_2(S_i)} \psi(\pi(\gamma(g,x,y)))\,dx\wedge dy
\end{align*}
It will be useful to know the volumes of the sets $S_i\times S_j$.
They are as follows:
\begin{itemize}[leftmargin=*]
\item
$\OP{vol}(S_1\times S_1)=16s^4$,
\item
$\OP{vol}(S_1\times S_2)=4s^2(1-2s)^2= 4s^2 - 16s^3 + 16s^4$,
\item
$\OP{vol}(S_1\times S_3)=\OP{vol}(S_1\times S_4)=4s^2(1-2s)2s=8s^3-16s^4$,
\item
$\OP{vol}(S_2\times S_2)=(1-2s)^2=1-4s+4s^2$,
\item
$\OP{vol}(S_2\times S_3)=\OP{vol}(S_2\times S_4)=(1-2s)^2(1-2s)2s=2s-12s^2+24s^3-16s^4$,
\item
$\OP{vol}(S_3\times S_3)=\OP{vol}(S_4\times S_4)
=\OP{vol}(S_3\times S_4)=4s^2(1-2s)^2=4s^2-16s^3+16s^4$.
\end{itemize}
The volumes are polynomials of $s$ and what will be important below is
their degrees.
Let us consider the element $\pi(\gamma(w(v,h),x,y))\in \B F_2$ for
various configurations:
\begin{itemize}[leftmargin=*]
\item
$(x,y)\in S_1\times S_2$; depending on the position of $x$ in the
red square we obtain:
\begin{itemize}
\item (top left) $\pi(\gamma(w(v,h),x,y))=w(a,b)$,
\item (top right) $\pi(\gamma(w(v,h),x,y))=w(a^{-1},b)$,
\item (bottom left) $\pi(\gamma(w(v,h),x,y))=w(a,b^{-1})$,
\item (bottom right) $\pi(\gamma(w(v,h),x,y))=w(a^{-1},b^{-1})$,
\end{itemize}
Since the quasimorphism $\psi$ is invariant under inverting generators
we get that $\psi(\pi(\gamma(g,x,y)))=\psi(w(a,b))\neq 0$. Thus
\begin{align*}
\int_{S_1\times S_2}\psi(\pi(\gamma(g,x,y)))\,dx\wedge dy &=
\OP{vol}(S_1\times S_2)\psi(w(a,b))\\
&=(4s^2 - 16s^3 + 16s^4)\psi(w(a,b)).
\end{align*}
\item
$(x,y)\in S_1\times (S_3\cup S_4)$; for fixed $g$ the braid $\gamma(g,x,y)$
can attain finitely many values in this case and we
let $C_{1}:=\max|\psi(\pi(\gamma(g,x,y)))|$.
\begin{align*}
\left|\int_{S_1\times S_3}\psi(\pi(\gamma(g,x,y)))\,dx\wedge dy\right|
&\leq \OP{vol}(S_1\times S_2)C_1\\
&=(6s^3-16s^4)C_1.
\end{align*}
\item
$(x,y)\in S_2\times (S_3\cup S_4)$; we get that $\pi(\gamma(g,x,y))$
is either a power of $a$ or a power of $b$ so
$\psi(\pi(\gamma(g,x,y)))=0$.
\item
$(x,y)\in S_3\times S_4$; here the situation is similar to the first case and the
value of $\pi(\gamma(g,x,y))$ depends on the positions $x$ and
$y$ in the strips and we obtain that
$\psi(\pi(\gamma(g,x,y)))=\psi(w(a,b))\neq 0$. We get
\begin{align*}
\int_{S_3\times S_4}\psi(\pi(\gamma(g,x,y)))\,dx\wedge dy &=
\OP{vol}(S_3\times S_4)\psi(w(a,b))\\
&=(4s^2 - 16s^3 + 16s^4)\psi(w(a,b)).
\end{align*}
\item
$(x,y)\in \B X_2(S_1)$; for fixed $g$ the braid $\gamma(g,x,y)$
can attain finitely many values and let
$C_2:=\max|\psi(\pi(\gamma(g,x,y)))|$. We have that
\begin{align*}
\left|\int_{\B X_2(S_1)}\psi(\pi(\gamma(g,x,y)))\,dx\wedge dy\right|
&\leq \OP{vol}(S_1\times S_1)C_1\\
&=16s^4C_2.
\end{align*}

\item
$(x,y)\in \B X_2(S_3)\cup \B X_2(S_4)$; in this case the
braid $\gamma(g,x,y)$ is equal to either $a_1^ma_2^n$ or
$b^m_1b^n_2$ and hence $\pi(\gamma(g,x,y))$ is equal to
a power of a generator and $\psi(\pi(\gamma(g,x,y)))=0$.

\end{itemize}

Chose $s\in \left(0,\frac{1}{4}\right)$ small enough so that
$$
(6s^3-16s^4)C_1 + 16s^4 C_2 < 2(4s^2 - 16s^3 + 16s^4) \psi(w(a,b)).
$$
For such an $s$ we obtain that
$$
\int_{\B X_2(\B T^2)}\psi(\pi(\gamma(g,x,y)))\,dx\wedge dy \neq 0.
$$
Since $g(x)=x$ and $g(y)=y$ for $(x,y)$ outside the subset of arbitrarily
small measure (depending on the number $\epsilon$), we have that
$\gamma(g^p,x,y)=\gamma(g,x,y)^p$, for $(x,y)$ in the set of measure
which is arbitrarily close to full, according to Lemma \ref{L:h(x)=x}.
This implies that $\psi(\pi(\gamma(g^p,x,y)))=p\psi(\pi(\gamma(g,x,y)))$
and finally we get that
$$
\lim_{p\to \infty}
\int_{\B X_2(\B T^2)}\frac{1}{p}\psi(\pi(\gamma(g^p,x,y)))\,dx\wedge dy \neq 0.
$$
which finishes the proof.
\end{proof}

\begin{example}[Eggbeater]\label{E:eggbeater}
Let $g = h^{2m}v^{2m-1}\cdots h^2v\in \OP{Ham}(\B T^2)$, where $h,v\in \OP{Ham}(\B T^2)$
are Hamiltonian diffeomorphisms defined in the above proof. It follows from
Proposition \ref{P:Q-inf-dim} below and the above proof that the quasimorphism
$\C G(\psi_{c_m}\circ \pi)$ is unbounded on the cyclic subgroup
of $\OP{Ham}(\B T^2)$ generated by $g$. Here
$\psi_{c_m}\colon \B F_2\to \B R$ is the quasimorphism associated
with the function $c_m\colon \B Z^m\to \B Z$ given by
$$
c_m(i_1,\ldots,i_m)=\OP{sgn}(|i_1|-|i_m|),
$$
see Lemma \ref{L:qm}. Since the quasimorphism $\C G(\psi_{c_m}\circ \pi)$ vanishes
on autonomous elements, we get that the cyclic subgroup generated
by $g$ is unbounded with respect to the autonomous norm.

\hfill $\diamondsuit$
\end{example}

\section{Quasimorphisms with vanishing properties}
\label{S:quasimorphisms}
In this section we prove that the space $\B Q(\B F_2;\B Z/2\times \B Z/2)$
of quasimorphisms on the free group invariant under the action of the
Klein group is infinite dimensional. It is done by constructing
explicit examples. Our construction is inspired by the example from the
proof of Theorem 1.1 in \cite{MR2125453}.

Let $w\in \B F_n$ be a reduced word.
A {\em syllable} in $w$ is a maximal power of a generator occurring
in $w$. The exponent of a syllable $s$ is denoted by $e(s)$. For example,
the commutator $[a,b]\in \B F_2$ has four syllables all with exponents
equal to $1$.
Let $w=s_1s_2\dots s_k$ be a reduced word, where $s_i$ are syllables.
It defines a $k$-tuple of integers $(e(s_1),e(s_2),\ldots,e(s_k))$.

Let $c\colon \B Z^m\to \B Z$ be a bounded function which satisfies
the identity:
\begin{equation}\label{Eq:inverse}
c(i_1,\ldots,i_m) = - c(-i_m,\ldots,-i_1).
\end{equation}
Let $\psi_c\colon \B F_n\to \B R$ be defined as follows.
Let $w=s_1\dots s_k\in \B F_n$. If $k<m$ then $\psi_c(w)=0$.
If $k\geq m$ then
$$
\psi_c(s_1s_2\dots s_k):= \sum_{i=1}^{k-m+1}c(e(s_i),\ldots,e(s_{i+m-1})).
$$

\begin{lemma}\label{L:qm}
Let $c\colon \B Z^m\to \B Z \cap [-B,B]$ be a bounded function satisfying
the identity (\ref{Eq:inverse}).
Then the function $\psi_c$ is a quasimorphism with defect bounded
by $3(m+1)B$.
\end{lemma}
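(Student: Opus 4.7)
The plan is to estimate the defect $\Delta(f,g) := |\psi_c(f) - \psi_c(fg) + \psi_c(g)|$ by tracking, at the syllable level, how the length-$m$ windows used in the three terms relate to each other. Writing the reduced words as $f = s_1\cdots s_j$, $g = t_1\cdots t_l$ and $fg = u_1\cdots u_n$, the standard free-group reduction produces a unique $p\geq 0$ such that $s_{j-i+1}=t_i^{-1}$ for $i=1,\ldots,p$; after these $p$ syllable pairs cancel, either $s_{j-p}$ and $t_{p+1}$ are powers of different generators and survive unchanged in $fg$, or they are powers of the same generator and merge into a single syllable of exponent $e(s_{j-p})+e(t_{p+1})$.

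I would then classify the length-$m$ windows of $f$, of $g$ and of $fg$ into three families. The \emph{safe} windows lie entirely inside the inherited part (inside $s_1,\ldots,s_{j-p-1}$ for $f$, inside $t_{p+2},\ldots,t_l$ for $g$, and the correspondingly unmodified blocks of $fg$). The \emph{cancelled} windows lie entirely inside $\{s_{j-p+1},\ldots,s_j\}$ in $f$ or inside $\{t_1,\ldots,t_p\}$ in $g$. The remaining \emph{boundary} windows either straddle the cancellation interface or touch the possibly-merged syllable. By construction, safe windows contribute identically to the $\psi_c(f)+\psi_c(g)$ side and to the $\psi_c(fg)$ side, so they drop out of $\Delta(f,g)$.

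The key step, and the only place where the hypothesis (\ref{Eq:inverse}) is actually used, is that the cancelled windows of $f$ and $g$ cancel in pairs. A cancelled window of $g$ starting at position $i'$ has exponent vector $(-e(s_{j-i'+1}),\ldots,-e(s_{j-i'-m+2}))$, and by (\ref{Eq:inverse}) its $c$-value equals $-c(e(s_{j-i'-m+2}),\ldots,e(s_{j-i'+1}))$, which is minus the $c$-value of the cancelled window of $f$ starting at position $j-i'-m+2$. As $i'$ ranges over $\{1,\ldots,p-m+1\}$ this is a sign-reversed bijection onto the cancelled windows of $f$, so the two sums cancel and contribute nothing to $\Delta(f,g)$.

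What remains is to bound the number of boundary windows. A direct count shows at most $m$ boundary windows in each of $f$, $g$ and $fg$: the at most $m-1$ straddling ones, plus at most one additional window involving the merged syllable in the merging case; short-word cases where $\min(j,l,n)<m$ only force $\psi_c$ to vanish on the corresponding word and reduce these counts. Since each $c$-value is at most $B$ in absolute value, this yields $\Delta(f,g)\leq 3mB \leq 3(m+1)B$. The main obstacle is the careful bookkeeping around the merged syllable and the several short-word edge cases, to ensure the constant comes out as stated; no additional idea beyond the cancellation bijection above is required.
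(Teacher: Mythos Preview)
Your argument is correct, and the window-level cancellation bijection you describe is exactly the right use of identity~(\ref{Eq:inverse}). Your counts check out: in the merging case one gets at most $m$ boundary windows in each of $f$, $g$, $fg$, and in the non-merging case at most $m-1$ each, so the defect is indeed bounded by $3mB\leq 3(m+1)B$.

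The paper organizes the same computation differently. It first treats the special case where $s$, $t$ are reduced words whose concatenation $st$ is already reduced (possibly with one syllable merger at the junction), obtaining
\[
|\psi_c(s)-\psi_c(st)+\psi_c(t)|\leq (m+1)B.
\]
For the general case it writes $f=su$, $g=u^{-1}t$ with $su$, $u^{-1}t$, $st$ all reduced, applies the special case three times, and finishes with $\psi_c(u)+\psi_c(u^{-1})=0$, which is identity~(\ref{Eq:inverse}) summed over all windows of $u$. Your ``cancelled windows pair up with opposite sign'' is precisely this last equality, unpacked window by window; and your boundary count is what the paper's three applications of the $(m+1)B$ estimate amount to. So the two arguments are really the same computation viewed at different granularities: the paper's triangle decomposition hides the bookkeeping inside three black-box applications of the easy case, while you do the bookkeeping once, directly. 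Your route yields the slightly sharper constant $3mB$, at the cost of having to be more careful about the merged-syllable and short-word edge cases.
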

\begin{proof}
Let $s=s_1\cdots s_k$ and $t=t_1\cdots t_l$ be reduced words
such that $st$ is also reduced. We have the following expression
for the value of $\psi_c(st)$:
\begin{align*}
\psi_c(st)&=
\begin{cases}
\psi_c(s)+\psi_c(t)\\
+ \sum_{i=1}^{m-1}c(e(s_{k-m+1+i}),\ldots,e(s_{k}),e(t_1),\ldots,e(t_i))\\
\text{ if last letter of $s$ is different from the first letter of $t$, or}\\
\psi_c(s)+\psi_c(t)\\
+ \sum_{i=1}^{m-1}c(e(s_{k-m+1+i}),\ldots,e(s_{k}t_1),\ldots,e(t_i))\\
- c(e(s_{k-m+1}),\ldots,e(s_k)) - c(e(t_1),\ldots,e(t_m)),\\
\text{ otherwise.}
\end{cases}
\end{align*}
It follows that in this special case we have the following estimate:
$$
\left|\psi_c(s) - \psi_c(st) + \psi_c(t)\right| \leq (m+1)B.
$$

Let us now consider the general case.
Let $s=s_1\cdots s_k,\, t=t_1\cdots t_l, u=u_1\cdots u_p\in \B F_n$ be reduced words,
where $s_i,t_i$ and $u_i$ are syllables. Suppose that $su$, $u^{-1}t$ and $st$
are reduced. Using the previous inequality we obtain the following estimate of the defect:
\begin{align*}
&\left|\psi_c(su)-\psi_c(st)+\psi_c(u^{-1}t)\right| \leq\\
&\leq \left| \psi_c(s)+\psi_c(u) - \psi_c(s) - \psi_c(t) +\psi_c(u^{-1}) + \psi_c(t)\right | + 3(m+1)B\\
&=\left|\psi_c(u) + \psi_c(u^{-1}) \right | + 3(m+1)B\\
&\leq 3(m+1)B.
\end{align*}
The fact that $\psi_c\left( u^{-1} \right)=-\psi_c(u)$ follows from the
identity (\ref{Eq:inverse}).  This proves that $\psi_c$ is a quasimorphism with
defect $3(m+1)B$.
\end{proof}

\begin{example}\label{E:bst}
Let $c\colon \B Z^2\to \B Z$ be defined by $c(m,n)=\OP{sgn}(|m|-|n|)$.
The associated quasimorphism $\psi_c\colon \B F_2\to \B Z$ is clearly
invariant under the action of $\B Z/2\times \B Z/2$. To see that
it is unbounded consider the cyclic subgroup generated by
$a^4b^3a^2b$. We have that
$$
\psi_c\left(\left(a^4b^3a^2b)^n\right)\right)=2n+1.
$$
which implies that the homogenization of this quasimorphism is nontrivial.
It follows from Lemma \ref{L:invariant-homogeneous} that the homogenization
is also invariant.
\end{example}

\begin{proposition}\label{P:Q-inf-dim}
The subspace $\B Q(\B F_2;\B Z/2\times \B Z/2)\subset \B Q(\B F_2)$
of homogeneous quasimorphism invariant under the action of
$\B Z/2\times \B Z/2$ is infinite dimensional.
\end{proposition}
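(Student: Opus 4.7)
The plan is to generalize Example \ref{E:bst} to construct an infinite linearly independent family in $\B Q(\B F_2;\B Z/2\times \B Z/2)$, all of window size $m=2$. For each pair of integers $(p,q)$ with $2\le p<q$ I would define $c_{p,q}\colon \B Z^2\to \B Z$ by
$$
c_{p,q}(i,j) =
\begin{cases}
1 & \text{if } (|i|,|j|)=(p,q),\\
-1 & \text{if } (|i|,|j|)=(q,p),\\
0 & \text{otherwise}.
\end{cases}
$$
A routine check shows that $c_{p,q}$ is bounded by $1$, depends only on $|i|$ and $|j|$, and satisfies the antisymmetry identity (\ref{Eq:inverse}), so Lemma \ref{L:qm} produces a quasimorphism $\psi_{c_{p,q}}$. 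Because $c_{p,q}$ depends only on absolute values, the generators $\sigma_a,\sigma_b$ act trivially on $\psi_{c_{p,q}}$, and by Lemma \ref{L:invariant-homogeneous} the same holds for the homogenization $\overline{\psi_{c_{p,q}}}\in \B Q(\B F_2;\B Z/2\times \B Z/2)$.

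To establish linear independence I would test the family on explicit witness elements. For each $(p,q)$ with $2\le p<q$, set $w_{p,q}:=a^p b^q a b$, a reduced four-syllable word with exponent sequence $(p,q,1,1)$; since the first and last syllables involve different generators, $w_{p,q}^n$ is reduced with syllable exponents periodically $(p,q,1,1,p,q,1,1,\dots)$. Computing $\psi_{c_{p,q}}(w_{p,q}^n)$ as a sum over consecutive pairs of exponents, one observes that within each period only the pair $(p,q)$ contributes: the other three period pairs $(q,1)$, $(1,1)$, $(1,p)$ each contain an exponent equal to $1$, which is excluded from the support of $c_{p,q}$ because $p,q\ge 2$. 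Hence $\psi_{c_{p,q}}(w_{p,q}^n) = n + O(1)$ and $\overline{\psi_{c_{p,q}}}(w_{p,q}) = 1$.

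The same exponent bookkeeping applied to $(p',q')\ne(p,q)$ with $2\le p'<q'$ gives $\overline{\psi_{c_{p',q'}}}(w_{p,q}) = 0$, since no period pair of $w_{p,q}$ matches either $(p',q')$ or $(q',p')$: the candidate $(p,q)$ would force $(p',q')=(p,q)$ because the ordering $p'<q'$ excludes the swap, and the remaining three pairs each contain a $1$ and are thus excluded by $p',q'\ge 2$. The evaluation matrix $\left(\overline{\psi_{c_{p,q}}}(w_{p',q'})\right)$ is therefore diagonal, and any finite linear relation $\sum \lambda_{p,q}\,\overline{\psi_{c_{p,q}}}=0$ forces every coefficient to vanish. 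This exhibits an infinite linearly independent family and proves the proposition. The only delicate step is the case analysis confirming the diagonal structure; the restriction $p,q\ge 2$ is precisely what keeps the small-exponent period pairs from contaminating the cross-evaluations.
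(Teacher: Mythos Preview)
Your proof is correct and complete. The verification that $c_{p,q}$ satisfies identity (\ref{Eq:inverse}), the invariance argument, and the diagonal evaluation matrix all check out; in fact $\psi_{c_{p,q}}(w_{p,q}^n)=n$ exactly, not just $n+O(1)$.

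Your approach differs from the paper's in a clean way. The paper varies the \emph{window size}: it takes $c_m\colon \B Z^m\to\B Z$ given by $c_m(i_1,\dots,i_m)=\OP{sgn}(|i_1|-|i_m|)$ for $m\ge 2$, tests on the staircase words $w_m=a^{2m}b^{2m-1}\cdots a^2b$, and obtains an upper-triangular (rather than diagonal) evaluation matrix $\bigl(\psi_{c_{3i}}(w_{3j})\bigr)$. You instead fix the window size at $m=2$ and vary the \emph{support} of the function $c$, indexing by pairs $(p,q)$; this yields a genuinely diagonal matrix and shorter test words. Your route is arguably tidier and avoids any discussion of what happens above the diagonal. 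The paper's choice of varying $m$ is not gratuitous, however: the family $\psi_{c_m}$ reappears in Example~\ref{E:eggbeater}, where the window structure matches the eggbeater diffeomorphisms $h^{2m}v^{2m-1}\cdots h^2v$, so their construction feeds directly into the explicit Hamiltonian examples. Both approaches prove the proposition with equal rigor.
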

\begin{proof}
Let $c_m\colon \B Z^m\to \B Z$ for $m\geq 2$ be a function defined by
$$
c_m(i_1,\ldots,i_m):= \OP{sgn}(|i_1|-|i_m|).
$$
Consider the sequence $\psi_{c_m}$ of quasimorphisms
defined in the beginning of this section. Since the function
$c_m$ depends only on the absolute values, the quasimorphisms
$\psi_{c_m}$ are invariant under inverting generators.

Let $w_m = a^{2m}b^{2m-1}\cdots a^2b$. We get that
$$
\psi_{c_m}\left( w^n \right) = 2n+m-1.
$$
Let $k\in \B N$ be a positive integer.
Consider the square $k\times k$-matrix with entries $a_{ij}=\psi_{c_{3i}}(w_{3j})$
and observe that it is upper triangular with positive entries on the diagonal
and hence it has a positive determinant. It implies that the
functions $\psi_{c_{3i}}$ for $i=1,\ldots,k$ are linearly independent
for any $k\in \B N$. This shows that there exists an infinite dimensional
subspace of quasimorphisms invariant under inverting generators.
It follows from Lemma \ref{L:invariant-homogeneous}
that $\dim \B Q(\B F_2;\B Z/2\times \B Z/2)= \infty$.
\end{proof}

\section{Vanishing on autonomous diffeomorphisms}\label{S:vanishing}
\begin{lemma}\label{L:autonomous}
Let $\psi\colon \B F_2=\langle a,b\rangle\to \B R$ be a homogeneous quasimorphism. If it
vanishes on primitive elements and on the commutator $[a,b]$
then the quasimorphism
$$
\C G(\psi \circ \pi)\colon\OP{Ham}(\B T^2)\to \B R
$$
vanishes on the set of autonomous diffeomorphisms.
\end{lemma}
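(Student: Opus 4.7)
The plan is to analyze the element $\pi(\gamma(h^p,x,y))\in \B F_2$ for an autonomous $h=\phi^H_1$ and to show that $|\psi(\pi(\gamma(h^p,x,y)))|$ is bounded independently of $p$ for almost every $(x,y)$, so that $\frac{1}{p}\psi(\pi(\gamma(h^p,x,y)))\to 0$ pointwise and dominated convergence forces $\C G(\psi\circ\pi)(h)=0$. First I would reduce, by an approximation argument based on density of Morse functions together with a continuity property of $\C G(\psi\circ\pi)$, to the case that $H$ is Morse. Then, modulo the measure-zero critical locus, $\B T^2$ decomposes into finitely many open regions, each either an \emph{elliptic region} (a topological disk around a local extremum foliated by null-homotopic level circles) or an \emph{annular region} (a cylinder between singular level curves, foliated by parallel essential circles). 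Since any two disjoint essential simple closed curves on $\B T^2$ are isotopic, every essential level circle represents the same primitive homology class $\pm v\in H_1(\B T^2;\B Z)$, corresponding to a primitive element $\rho(v)\in \B F_2=\pi_1(\B T^2\setminus \{0\})$.

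Lifting the flow to the universal cover $\B R^2$, for each regular $z$ with orbit-period $T(z)$ the lifted trajectory $\tilde z(t)$ equals $tV(z)$ plus a bounded periodic correction, where $V(z)=0$ on elliptic regions and $V(z)=\pm v/T(z)$ on annular regions. I would then split on $(x,y)$ into two cases. In the first case $V(x)\ne V(y)$, so $V(x)-V(y)=\alpha v$ is a nonzero multiple of the primitive direction, and the relative trajectory $\tilde x(t)-\tilde y(t)$ covers $\approx p\alpha$ fundamental domains along $v$ with bounded oscillation. Concatenated with the bounded geodesic endpoints, the resulting loop in $\B T^2\setminus\{0\}$ represents an element of $\B F_2$ of shape $u_1\cdot\rho(v)^{n_p}\cdot u_2$ with $u_1,u_2$ words of uniformly bounded length and $n_p=\lfloor p\alpha\rfloor$. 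Since $\rho(v)$ is primitive and $\psi$ is a homogeneous quasimorphism vanishing on primitives, the value $\psi(u_1\,\rho(v)^{n_p}\,u_2)$ is bounded by a constant depending only on $|u_1|,|u_2|$ and the defect $D_\psi$, hence independent of $p$.

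In the second case $V(x)=V(y)$, so $\tilde x(t)-\tilde y(t)$ stays in a bounded subset of $\B R^2$; the projected relative loop is confined to a bounded region of $\B T^2$ but can wind around the puncture $0$ a number $m_p$ of times linear in $p$, determined by the angular-frequency difference $1/T(x)-1/T(y)$. Every such winding contributes a conjugate of $[a,b]$, since $[a,b]$ is the boundary of a small disk around the puncture in $\B T^2\setminus\{0\}$. Thus $\pi(\gamma(h^p,x,y))$ is, up to bounded factors, a conjugate of $[a,b]^{m_p}$; homogeneity (hence conjugation-invariance) of $\psi$ together with $\psi[a,b]=0$ gives the same $O(1)$ bound.

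The remaining step is dominated convergence: the pointwise limit is $0$ almost everywhere, and the bound one obtains is a function $C(x,y)$ that must be shown to be in $L^1(\B X_2(\B T^2))$. The hard part of the proof is precisely this uniform control, because near the separatrices of $H$ the periods $T(z)$ blow up and the bounded words $u_1,u_2$ can become long; one must verify quantitatively that the offending region has small enough measure, which is where the Morse hypothesis on $H$ is essential via a standard exhaustion of the regular set.
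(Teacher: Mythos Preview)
Your outline differs substantially from the paper's argument, and as written it has two genuine gaps.

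\textbf{The Morse reduction.} You invoke ``a continuity property of $\C G(\psi\circ\pi)$'' to pass from general $H$ to Morse $H$, but you neither specify the topology nor justify continuity in it. The paper does remark (citing \cite{MR2253051,MR2884036}) that $\psi[a,b]=0$ implies $C^0$-continuity, but importing that here is a nontrivial external input and borders on circularity. The paper's proof needs no Morse hypothesis whatsoever: it classifies orbits of an \emph{arbitrary} autonomous flow on $\B T^2$ as constant, contractible periodic, essential periodic, or nonperiodic (homoclinic/heteroclinic), and treats each pairing directly. The nonperiodic locus has measure zero by Poincar\'e recurrence, so this exhausts almost every $(x,y)$.

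\textbf{Dominated convergence.} You explicitly leave the integrability of your bound $C(x,y)$ as ``the hard part,'' unproved; near separatrices your words $u_1,u_2$ are not uniformly bounded, so the pointwise bound really does depend on $(x,y)$. The paper avoids this completely with two devices you are missing. First, for almost every $(x,y)$ it computes the braid \emph{exactly}: after straightening by a suitable $f\in\OP{Symp}(\B T^2)$ one gets $\gamma(fh^pf^{-1},f(x),f(y))$ equal to a power of $\sigma^2$, a product $b_i^m b_j^n$, or $b_j^m b_i^n\sigma^2$; projecting to $\B F_2$ yields a power of $[a,b]$, a power of a primitive, or a primitive power times a single conjugate of $[a,b]^{\pm1}$ (using Nielsen's theorem that any commutator of primitives is conjugate to $[a,b]^{\pm1}$). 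In every case $|\psi(\pi(\gamma(h^p,x,y)))|\le D_\psi$, a \emph{uniform} bound independent of $(x,y)$ and $p$. Second --- and this is the key trick --- once $|\C G(\psi\circ\pi)(h)|\le D_\psi$ holds for every autonomous $h$, one notes that $h^n$ is also autonomous, so by homogeneity
\[
|\C G(\psi\circ\pi)(h)|=\tfrac{1}{n}\,|\C G(\psi\circ\pi)(h^n)|\le \tfrac{D_\psi}{n}\to 0.
\]
No integrability estimate is needed.

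Your $\B R^2$-lift picture and the dichotomy $V(x)\ne V(y)$ versus $V(x)=V(y)$ are morally the paper's split between ``at least one essential periodic orbit'' and ``both orbits contractible,'' but where you obtain an asymptotic shape $u_1\,\rho(v)^{n_p}\,u_2$ with error terms to control, the paper's symplectic straightening gives the braid in closed form and hence the uniform bound for free.
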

\begin{proof}
Let $H\colon \B T^2\to \B R$ be a smooth function and let
$h_t\in \OP{Ham}(\B T^2)$ be the autonomous flow induced by $H$.
Let
$$
\C O(h_t,x):=\{h_t(x)\in \B T^2\,|\,t\in \B R\,\}
$$
denote the orbit of the point $x$ with respect to the flow $h_t$.
Such an orbit is either periodic (including constant)
or it is an interval between one (homoclinic) or two (heteroclinic)
fixed points.

Let $z_1,z_2\in \B T^2$ be basepoints and let $x,y\in \B T^2$.
In what follows we analyze the braid $\gamma(h^p,x,y)$ for
$p\in \B N$.
We break it down into cases depending on the form of the orbits
$\C O(h_t,x)$ and $\C O(h_t,y)$.
We consider the following cases:
\begin{enumerate}[leftmargin=*]
\item $\C O(h_t,x)=\{x\}$.

\begin{enumerate}
\item If $\C O(h_t,y)=\{y\}$ then $\gamma(h^p,x,y)$ is trivial.

\item If $\C O(h_t,y)$ is a contractible periodic orbit bounding a disc containing the point $x$
then $\gamma(h^p,x,y)$ is an integer power of $\sigma^2$.

\item If $\C O(h_t,y)$ is a contractible periodic orbit bounding a disc not containing
the point $x$ then $\gamma(h^p,x,y)$ is either trivial or equal to $\sigma^2$.

\item If $\C O(h_t,y)$ is a homotopically nontrivial periodic orbit then
its image is a simple closed curve. There exists a symplectic diffeomorphism
of the torus $f\in \OP{Symp}(\B T^2)$ preserving the basepoints
$z_1$ and $z_2$ such that the image $f(\C O(h_t,y))$ of the
orbit represents the standard generator $(1,0)\in \B Z^2=\pi_1(\B T^2)$
disjoint from $f(x)$. Thus the braid $\gamma(fh^pf^{-1},f(x),f(y)))=(F,A)$, where
both $F\in \B F_2$ and $A\in \B Z^2$ are powers of primitive elements.
Observe that $\gamma(fh^pf^{-1},f(x),f(y)))=f_*(\gamma(h^p,x,y))$, where
$f_*\colon \B P_2(\B T^2)\to \B P_2(\B T^2)$ is the automorphism induced
by~$f$. Since $f$ induces an automorphism of the
quotient $\B F_2$ we get that $\pi(\gamma(h^p,x,y))$ is a power of a primitive element.

\item
If the orbit $\C O(h_t,y)$ is nonperiodic then there exists $p_0\in \B N$
such that
$$
\#\{\gamma(h^p,x,y)\in \B P_2(\B T^2)\,|\,p\geq p_0\,\}\leq 2.
$$
Indeed, let $y_+:=\lim_{t\to \infty}h_t(y)$ be the limit point and let
$\epsilon >0$. There exists $p_0$ such that $|h^p(y)-y_+|<\epsilon$
for every $p\geq p_0$. Depending on a relative position of the
points $z_1,z_2,x$ and $h^p(y)$ the braids $\gamma(h^p,x,y)$
and $\gamma(h^q,x,y)$ for $p,q\geq p_0$ may differ by at most one
crossing arising when the endpoints $h^p(y)$ or $h^q(y)$ and $x$
are joined to the basepoints.
\end{enumerate}

\item The orbit $\C O(h_t,x)$ is nonperiodic.
Let $x_{+}:=\lim_{t\to \infty}h_t(x)$.
\begin{enumerate}
\item
If $\C O(h_t,y)$ is either constant or nonperiodic then
$$
\#\{\gamma(h^p,x,y)\in \B P_2(\B T^2)\,|\,p\geq p_0\,\}\leq 2
$$
as in the previous case. The only difference is that for given
$\epsilon$ one has to choose $p_0$ such that both
$|h^p(x)-x_+|< \epsilon$ and $|h^p(y)-y_+|<\epsilon$ for all $p\geq p_0$.

\item
If $\C O(h_t,y)$ is a contractible periodic orbit such that
$\C O(h_t,x)$ is contained in the disc bounded by $\C O(h_t,y)$
then $\gamma(h^p,x,y)$ is a power of $\sigma^2$.

\item
If $\C O(h_t,y)$ is a contractible periodic orbit such that
$\C O(h_t,x)$ is not contained in the disc bounded by $\C O(h_t,y)$
then $\gamma(h^p,x,y)$ is either trivial or equal to $\sigma^2$.

\item
If $\C O(h_t,y)$ is a homotopically nontrivial periodic orbit
then, as in the case (1)(d) above, we get that $\gamma(h^p,x,y)$
is a power of a primitive element.
\end{enumerate}

\item
The orbit $\C O(h_t,x)$ is contractible periodic.

\begin{enumerate}
\item The case when $\C O(h_t,y)$ is either constant or nonperiodic
has been dealt with above.
\item If the orbits $\C O(h_t,x)$ and $\C O(h_t,y)$ are concentric
then $\gamma(h^p,x,y)$ is a power of $\sigma^2$.
\item If the orbits $\C O(h_t,x)$ and $\C O(h_t,y)$ are contractible
and not concentric then $\gamma(h^p,x,y)$ is either trivial or
equal to $\sigma^2$.
\item If $\C O(h_t,y)$ is periodic and homotopically nontrivial
then the braid $\gamma(h^p,x,y)$ is a power of a primitive element and the
argument is the same as in the analogous cases above.
\end{enumerate}

\item
If $\C O(h_t,x)$  is periodic and homotopically nontrivial
then the only case which has not been done above is when
the orbit $\C O(h_t,y)$ is periodic and not contractible.
In this case the images of both orbits are disjoint simple
closed curves and thus there exists a symplectic diffeomorphism
$f\in \OP{Symp}(\B T^2)$ preserving basepoints $z_1$ and $z_2$
such that both $f(\C O(h_t,x))$ and
$f(\C O(h_t,y))$ are disjoint simple closed curves representing
the generator $(1,0)\in H_1(\B T^2;\B Z)$ (recall that the intersection form on $H_1(\B T^2;\B Z)$ is non-degenerate and anti-symmetric).
In this case we have
$$
\gamma(fh^pf^{-1},f(x),f(y))=
\begin{cases}
b_i^m \,b_j^n\\
b_j^m\, b_i^n\, \sigma^2,
\end{cases}
$$
for some $m,n\in \B Z$, where $i, j\in \{1,2\}$ are distinct.
To see this recall that the conjugation by $\sigma$ swaps the
generators $b_1$ and $b_2$. It may also be useful to use the
following computation
$$
\sigma b_i^mb_j^n \sigma =
\sigma b_i^m b_j^n \sigma^{-1}\sigma^2
= b_j^m b_i^n \sigma^2.
$$
Thus the image of the above braid in the free group is equal to
either $b^{m-n}$ or $b^{m-n}[a,b]$.
Thus the image of the braid $\gamma(h^p,x,y)$ in the free group
is a product of a power of a primitive element and a commutator
of two primitive elements.
\end{enumerate}
According to a theorem of Nielsen \cite{MR1511907}, the commutator of
two primitive elements is conjugate to $[a,b]^{\pm 1}$.

As a conclusion we obtain that the projection
$\pi(\gamma(h^p,x,y))$ is equal to either one of the following:
\begin{itemize}
\item an integer power of the commutator $[a,b]$,
\item an integer power of a primitive element,
\item a product of a power of a primitive element and a conjugate
of the commutator $[a,b]$ or its inverse,
\end{itemize}
or there exists $p_0\in \B N$ such that
$$
\#\{\gamma(h^p,x,y)\in \B P_2(\B T^2)\,|\,p\geq p_0\,\}\leq 2.
$$
Let $\psi\colon \B F_2\to \B R$ be a homogeneous quasimorphism
vanishing on primitive elements and on the commutator $[a,b]$.
If $\gamma(h^p,x,y)$ attains finitely many
values for $p\geq p_0$ then
$$
\lim_{p\to \infty}\frac{1}{p}\psi(\gamma(h^p,x,y))=0
$$
and hence $\C G(\psi)(h)=0$.
If $\pi(\gamma(h^p,x,y))$ is a power of either a primitive element or
a conjugate of the commutator $[a,b]$  then
$\psi(\pi(\gamma(h^p,x,y)))=0$ by the hypothesis and we also get that
$\C G(\psi)(h)=0$.
Finally, if $\pi(\gamma(h^p,x,y))$ is a product
of a power of a primitive element and a power of a conjugate of the
commutator $[a,b]$ then
$$
|\C G(\psi)(h)|\leq D_{\psi}.
$$
Since for every autonomous diffeomorphism $h$ and $n\in \B N$, $h^n$ is also autonomous we get
$$
|\C G(\psi)(h)|=\left|\frac{\C G(\psi)(h^n)}{n}\right|\leq\frac{D_\psi}{n}.
$$
This concludes the proof of the vanishing property of $\C G(\psi)$ on the
set of autonomous diffeomorphisms.
\end{proof}

\subsection{Palindromes and primitive elements}\label{SS:bardakov}
The following observation and its proof are due to Bardakov, Shpilrain and Tolstykh
\cite{MR2125453}.
\begin{lemma}\label{L:bardakov}
Every primitive element $w\in \B F_2$ of the free group of rank $2$ is
a product of up to two palindromes.
\end{lemma}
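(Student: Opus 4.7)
The plan is to establish the stronger statement that the set
\[
\C P := \{u \in \B F_2 \mid u = p_1 p_2 \text{ for some } p_1, p_2 \in {\tt PAL}\cup\{1\}\}
\]
of products of at most two palindromes is invariant under $\OP{Aut}(\B F_2)$. Since a primitive element of $\B F_2$ is by definition an element of the orbit of $a$ under $\OP{Aut}(\B F_2)$, and since $a$ itself belongs to $\C P$ (being a palindrome of length one), this invariance at once gives the lemma.

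First I would invoke the classical theorem of Nielsen that $\OP{Aut}(\B F_2)$ is generated by three elementary automorphisms: the swap $\tau \colon a \leftrightarrow b$, the inversion $\iota \colon a \mapsto a^{-1},\, b \mapsto b$, and the Nielsen substitution $\nu \colon a \mapsto ab,\, b \mapsto b$. By composition, it then suffices to check that each of these three automorphisms sends $\C P$ into $\C P$.

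The two easy cases are $\tau$ and $\iota$: both are induced by involutions of the alphabet $\{a^{\pm 1}, b^{\pm 1}\}$ that commute with the word-reversal operation, so they send palindromes to palindromes, and preservation of $\C P$ is immediate.

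The main obstacle is showing closure under $\nu$. Given a product of palindromes $w = p_1 p_2$, each letter $a$ is replaced by $ab$ and each $a^{-1}$ by $b^{-1} a^{-1}$, and new cancellations typically occur between adjacent substituted syllables; these cancellations can disrupt the obvious left-right symmetry of $\nu(p_i)$. The plan is to carry out an explicit syllable-level analysis: locate the axis of symmetry of $\nu(p)$ for a single palindrome $p$ (possibly shifted by one $b$-letter depending on the parity and sign of the central syllable of $p$), verify that the freely reduced form of $\nu(p)$ is itself a product of at most two palindromes, and then re-bracket the product $\nu(p_1)\cdot\nu(p_2)$ so that any boundary cancellation is absorbed without raising the palindrome count above two. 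This may be done either by a direct case analysis following \cite{MR2125453}, or by induction on the total syllable length, reducing to a finite list of short base cases.
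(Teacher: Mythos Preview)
Your strategy---show that $\C P=\{p_1p_2:p_i\in{\tt PAL}\cup\{1\}\}$ is $\OP{Aut}(\B F_2)$-invariant---is a genuinely different route from the paper's, and the claim is even true. The swap and the inversion are handled correctly. The gap is the Nielsen move $\nu$. You do not prove $\nu(\C P)\subseteq\C P$; you only gesture at a ``syllable-level analysis'' and cite \cite{MR2125453}. But that reference does \emph{not} proceed by case analysis: its argument is precisely the cocycle computation that the paper reproduces. So as written, the crucial step is missing, and the method you propose for it is neither carried out nor the one in the source you invoke.

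The paper's proof is more direct. One defines $p(\theta)\in\B F_2$ by $[\sigma,\theta]=I_{p(\theta)}$, which makes sense because $\sigma$ maps to $-\OP{Id}$ in $\OP{Out}(\B F_2)=\OP{GL}(2,\B Z)$ and hence is central there. A short computation shows $p(\theta)$ is always a palindrome and that $p(\theta\xi)=p(\theta)\,\theta(p(\xi))$. Specialising $\xi$ to the transvection $\tau\colon a\mapsto ab$ (for which $p(\tau)=a$) yields $\theta(a)=p(\theta)^{-1}p(\theta\tau)$, an explicit two-palindrome factorisation of every primitive element. No induction or case analysis is needed.

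In fact the same cocycle dispatches your harder step in one line and would rescue your argument. From $R\theta R=I_{p(\theta)}\theta$ (where $R$ is word reversal) one gets, for any palindrome $p$, that $R(\theta(p))=p(\theta)\,\theta(p)\,p(\theta)^{-1}$, so both $p(\theta)\,\theta(p)$ and $\theta(p)\,p(\theta)^{-1}$ are palindromes. Hence
\[
\theta(p_1p_2)=\bigl[\theta(p_1)\,p(\theta)^{-1}\bigr]\cdot\bigl[p(\theta)\,\theta(p_2)\bigr]\in\C P.
\]
For your generator $\nu$ one has $p(\nu)=b$, and the needed identity $R\nu R=I_b\nu$ is a two-line check on $a$ and $b$; this is the clean substitute for the syllable bookkeeping you propose.
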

\begin{proof}
Let $\sigma\colon \B F_2\to \B F_2$ be an automorphism defined
on generators by $\sigma(a)=a^{-1}$ and
$\sigma(b)=b^{-1}$. Consider the extension
$$
\B F_2\stackrel{\iota}\to \OP{Aut}(\B F_2)
\stackrel{\pi}\to \OP{Out}(\B F_2)=\OP{GL}(2,\B Z),
$$
where the quotient is identified with the automorphism group
of the abelianisation $\B Z^2$ of the free group $\B F_2$.

Let $\theta\in \OP{Aut}(\B F_2)$ be an automorphism. Since
the image $\pi(\sigma)\in \OP{Out}(\B F_2)$ is equal to
$-\OP{Id}$, the image $\pi[\sigma,\theta]$ of the commutator
is trivial. This implies that
$[\sigma,\theta]=I_{p(\theta)}$ is an inner automorphism
for some $p(\theta)\in \B F_2$.
For example, if $\tau\in \OP{Aut}(\B F_2)$ is defined
by $\tau(a)={ab}$ and $\tau(b)=b$
then $p(\tau)=a$.

The following computation proves that $\sigma(p(\theta))=p(\theta)^{-1}$
which means that  $p(\theta)$ is a palindrome (notice that $\sigma$ is an
involution).
\begin{align*}
I_{\sigma(p(\theta))}&=\sigma\,I_{p(\theta)}\,\sigma^{-1}
=\sigma[\sigma,\theta]\sigma^{-1}\\
&=[\theta,\sigma]
=[\sigma,\theta]^{-1}
=I_{p(\theta)^{-1}}.
\end{align*}

The second observation is that
$$
p(\theta\xi)=p(\theta)\theta(p(\xi))
$$
for any $\theta,\xi\in \OP{Aut}(\B F_2)$. Indeed,
\begin{align*}
I_{p(\theta\xi)}&=[\sigma,\theta\xi]\\
&=\sigma\theta\sigma\theta^{-1}\cdot\theta\sigma\xi\sigma\xi^{-1}\theta^{-1}\\
&=[\sigma,\theta]\theta[\sigma,\xi]\theta^{-1}\\
&=I_{p(\theta)}\theta I_{p(\xi)}\theta^{-1}\\
&=I_{p(\theta)}I_{\theta(p(\xi))}=I_{p(\theta)\theta(p(\xi))}.
\end{align*}
Evaluating this identity on the automorphism $\tau$ defined above we get that
\begin{align*}
p(\theta\tau)&=p(\theta)\theta(p(\tau))\\
p(\theta\tau)&=p(\theta)\theta(a)\\
p(\theta)^{-1}p(\theta\tau)&=\theta(a),
\end{align*}
which shows that any primitive element $\theta(a)$ is a product
of two palindromes.

\end{proof}
\begin{corollary}\label{C:vanish-primitive}
If $\psi\colon \B F_2\to \B R$ is a homogeneous quasimorphism which
vanishes on palindromes then it vanishes on primitive elements.
\end{corollary}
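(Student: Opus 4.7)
The plan is to combine Lemma \ref{L:bardakov} (primitives are products of two palindromes) with the characterization of palindrome-vanishing quasimorphisms obtained in the proof of Proposition \ref{P:palindromes} ($\psi$ vanishes on palindromes if and only if $\psi$ is invariant under the involution $\sigma\in \OP{Aut}(\B F_2)$ given by $\sigma(a)=a^{-1},\,\sigma(b)=b^{-1}$). The key algebraic observation is that when a primitive element $w$ is written as a product of two palindromes $w=p_1p_2$, the element $\sigma(w)$ is \emph{conjugate} to $w^{-1}$, and then the two expressions for $\psi(\sigma(w))$ force $\psi(w)=0$.

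First I would reduce the claim to the case where $w$ is a nontrivial product of two distinct palindromes. If $w$ is itself a palindrome there is nothing to prove, so by Lemma \ref{L:bardakov} we may write $w=p_1p_2$ with $p_1,p_2\in{\tt PAL}$. Next I would compute
\[
\sigma(w)=\sigma(p_1)\sigma(p_2)=p_1^{-1}p_2^{-1},
\]
using the fact that $g\in{\tt PAL}$ is equivalent to $\sigma(g)=g^{-1}$, as noted in the proof of Proposition \ref{P:palindromes}. Since $w^{-1}=p_2^{-1}p_1^{-1}$, the identity
\[
\sigma(w)=p_1^{-1}p_2^{-1}=p_1^{-1}\bigl(p_2^{-1}p_1^{-1}\bigr)p_1=p_1^{-1}w^{-1}p_1
\]
shows that $\sigma(w)$ is conjugate to $w^{-1}$.

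Finally, I would invoke the standard fact that a homogeneous quasimorphism on any group is conjugation invariant and satisfies $\psi(g^{-1})=-\psi(g)$, to deduce
\[
\psi(\sigma(w))=\psi(w^{-1})=-\psi(w).
\]
On the other hand, by Proposition \ref{P:palindromes} the hypothesis that $\psi$ vanishes on palindromes gives $\sigma$-invariance, so $\psi(\sigma(w))=\psi(w)$. Combining these two equalities yields $\psi(w)=-\psi(w)$, hence $\psi(w)=0$, which is the claim. There is no real obstacle here: the only substantive content is the short conjugacy computation showing $\sigma(w)\sim w^{-1}$, and this is essentially forced by the palindrome factorization.
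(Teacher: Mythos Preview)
Your proof is correct, and it takes a genuinely different route from the paper's. Both arguments begin with Lemma~\ref{L:bardakov} to write a primitive $w=p_1p_2$ as a product of two palindromes, but the conclusions are drawn differently.

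The paper does not invoke $\sigma$-invariance at all. Instead it observes directly that for every $n\geq 1$,
\[
w^n=(p_1p_2)^n=\bigl((p_1p_2)^{n-1}p_1\bigr)\cdot p_2
\]
is again a product of two palindromes (the first factor satisfies $\sigma\bigl((p_1p_2)^{n-1}p_1\bigr)=(p_1^{-1}p_2^{-1})^{n-1}p_1^{-1}=\bigl((p_1p_2)^{n-1}p_1\bigr)^{-1}$). Hence $|\psi(w^n)|\leq D_\psi$ for all $n$, and homogeneity forces $\psi(w)=0$.

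Your argument instead passes through the equivalence, established in the proof of Proposition~\ref{P:palindromes}, between vanishing on palindromes and $\sigma$-invariance, and then uses the short conjugacy $\sigma(w)=p_1^{-1}w^{-1}p_1$ together with conjugation invariance of homogeneous quasimorphisms. This is a cleaner symmetry argument and avoids the (easy but slightly hidden) verification that $(p_1p_2)^{n-1}p_1$ is a palindrome. The paper's version, on the other hand, is more self-contained: it does not rely on the content of Proposition~\ref{P:palindromes}, only on the defining quasimorphism inequality and homogeneity. Either approach is perfectly adequate here.
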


\begin{proof}
Let $w\in \B F_2$ be a primitive element. It follows from Lemma
\ref{L:bardakov} that $w=uv$, where $u,v\in \B F_2$ are palindromes.
If $n\in \B N$ is a positive integer then
$$
w^n = (uv)^n = \left((uv)^{n-1}u\right) v,
$$
which shows that $w^n$ is a product of two palindromes. It implies that
$|\psi(w^n)|\leq D_{\psi}$. Since $\psi$ is homogeneous we get that
it vanishes on primitive elements.
\end{proof}

\begin{corollary}\label{C:autonomous}
If $\psi\in \B Q(\B F_2;\B Z/2\times \B Z/2)$ then the quasimorphism
$\C G(\pi^*\psi)\in \B Q(\OP{Ham}(\B T^2)$ vanishes on autonomous diffeomorphisms.
\end{corollary}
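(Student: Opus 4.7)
The plan is to reduce the statement directly to Lemma \ref{L:autonomous}, whose hypotheses require $\psi$ to vanish on primitive elements and on the commutator $[a,b]$. Both vanishing properties will follow from the $\B Z/2\times \B Z/2$-invariance, combined with homogeneity (which automatically yields conjugation invariance) and the Bardakov--Shpilrain--Tolstykh fact recorded in Lemma \ref{L:bardakov}.

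First I would dispatch the commutator condition. This is essentially the five-line computation displayed on page~\pageref{Eq:commutator} of the introduction: using $\sigma_a$-invariance one rewrites $\psi[a,b]=\psi[b,a]$, and then conjugation invariance (a consequence of homogeneity) gives $\psi[b,a]=-\psi[a,b]$, whence $\psi[a,b]=0$. Nothing new is needed here; I would simply cite the introductory computation or repeat it.

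Next I would show $\psi$ vanishes on palindromes. Let $\sigma := \sigma_a\circ\sigma_b \in \OP{Aut}(\B F_2)$; this is the automorphism sending each generator to its inverse, and it is exactly the automorphism considered in Proposition \ref{P:palindromes}. By hypothesis $\psi$ is $\B Z/2\times \B Z/2$-invariant, hence in particular $\sigma$-invariant. For any palindrome $w\in {\tt PAL}$ one has $\sigma(w)=w^{-1}$, so
\[
\psi(w) \;=\; \psi(\sigma(w)) \;=\; \psi(w^{-1}) \;=\; -\psi(w)
\]
by $\sigma$-invariance and homogeneity, giving $\psi(w)=0$. Corollary \ref{C:vanish-primitive} then upgrades this to vanishing on every primitive element of $\B F_2$.

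With both hypotheses of Lemma \ref{L:autonomous} verified, that lemma immediately yields that $\C G(\pi^*\psi)$ vanishes on every autonomous diffeomorphism of $\B T^2$, which is the statement of the corollary. There is no real obstacle: the entire corollary is an assembly of Lemma \ref{L:autonomous}, Corollary \ref{C:vanish-primitive}, and the elementary palindrome observation above; the only point deserving care is to note explicitly that $\sigma=\sigma_a\circ\sigma_b$ lies in the group generated by $\sigma_a,\sigma_b$, so that invariance under the Klein four-group automatically entails $\sigma$-invariance.
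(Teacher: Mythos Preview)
Your proof is correct and matches the paper's own argument essentially step for step: both establish $\sigma$-invariance from the Klein four-group invariance, deduce vanishing on palindromes via $\sigma(w)=w^{-1}$ and homogeneity, invoke Corollary~\ref{C:vanish-primitive} for primitives, cite the page~\pageref{Eq:commutator} computation for $\psi[a,b]=0$, and conclude via Lemma~\ref{L:autonomous}. The only cosmetic difference is the order in which the two vanishing conditions are verified.
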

\begin{proof}
Let $\sigma=\sigma_a \sigma_b \in \OP{Aut}(\B F_2)$. It acts on words
by inverting all letters. In particular, an element $w\in \B F_2$
is a palindrome if and only if $\sigma(w) = w^{-1}$.
If $\psi \in \B Q(\B F_2;\B Z/2\times \B Z/2)$ then $\psi$ is
invariant under the action of $\sigma$:
$$
\psi(\sigma(w)) = \psi(w).
$$
If $w\in \B F_2$ is a palindrome then $\psi(w^{-1}) = \psi(\sigma(w))=\psi(w)$
and it follows from the homogeneity of $\psi$ that it vanishes
on palindromes. It is then a consequence of Corollary \ref{C:vanish-primitive}
that $\psi$ vanishes on primitive elements.

As explained in the computation on page \pageref{Eq:commutator}, $\psi$ vanishes
on the commutator $[a,b]$ of the generators of $\B F_2$ and hence,
according to Lemma~\ref{L:autonomous}, the quasimorphism
$\C G(\pi^*\psi)$ vanishes on autonomous diffeomorphisms.
\end{proof}

\section{Further results}
\subsection{A lift to symplectic diffeomorphisms}
The braid $\gamma(h,x,y)$
is not well defined for $h\in \OP{Symp}_0(\B T^2)$. That is, it
depends on the isotopy from the identity to $h$.
For example, the isotopy defined $h_t(u,v)=(u+t,v)$
is a loop based at the identity and $\gamma(h_t,x,y)=a_1a_2$.

\begin{lemma}\label{L:loop}
Let $\ell\colon [0,1]\to \OP{Symp}_0(\B T^2)$ be a loop based at the identity.
Then for every $x,y\in \B T^2$ the braid $\gamma(\ell,x,y)$ is central.
\end{lemma}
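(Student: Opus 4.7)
The plan is to pass to the isomorphism $\Phi_*\colon \B P_2(\B T^2)\to \B F_2\times \B Z^2$ of Lemma~\ref{L:P2(T)}. Since $\B F_2$ has trivial center, the center of $\B F_2\times \B Z^2$ is $\{1\}\times \B Z^2$, so it suffices to show that the $\B F_2$-component $\pi(\gamma(\ell,x,y))$ is trivial.

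Unwinding definitions, $\gamma(\ell,x,y)$ is represented in $\B X_2(\B T^2)$ by the concatenation of the geodesics $z_i\to x_i$, the loop $t\mapsto(\ell_t(x),\ell_t(y))$ (which closes because $\ell_0=\ell_1=\OP{id}$), and the reversed geodesics $x_i\to z_i$. Applying $\Phi(u,v)=(u-v,v)$ and projecting onto the first coordinate of $\B T^2\setminus\{0\}\times \B T^2$, one obtains a loop in $\B T^2\setminus\{0\}$ that is conjugate, via the path $s\mapsto \gamma_{1,x}(s)-\gamma_{2,y}(s)$ from $z_1-z_2$ to $x-y$, to the evaluation loop
\[
\eta\colon t\mapsto \ell_t(x)-\ell_t(y)
\]
based at $x-y$. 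Since triviality of an element of $\pi_1(\B T^2\setminus\{0\})\cong\B F_2$ is invariant under conjugation, it suffices to show that $\eta$ is null-homotopic in $\B T^2\setminus\{0\}$; equivalently, that the evaluation map
\[
\OP{ev}_{x,y}\colon\OP{Symp}_0(\B T^2)\to \B T^2\setminus\{0\},\qquad f\mapsto f(x)-f(y),
\]
induces the zero homomorphism on $\pi_1$.

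To finish I would invoke the classical fact, due to Gromov, that $\OP{Symp}_0(\B T^2)$ is weakly homotopy equivalent to its subgroup of translations $\B T^2$, so that $\pi_1(\OP{Symp}_0(\B T^2))\cong \B Z^2$ is generated by the two translation loops $\ell^{(1,0)}_t(u,v)=(u+t,v)$ and $\ell^{(0,1)}_t(u,v)=(u,v+t)$. For any such translation loop both points are moved rigidly by the same amount, so
\[
\ell^{(\alpha,\beta)}_t(x)-\ell^{(\alpha,\beta)}_t(y)=x-y
\]
is constant in $t$, and the associated loop in $\B T^2\setminus\{0\}$ is null-homotopic. Hence $(\OP{ev}_{x,y})_*$ annihilates the generators of $\pi_1(\OP{Symp}_0(\B T^2))$, and therefore vanishes identically, which completes the proof.

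The main obstacle is the reliance on Gromov's identification of the homotopy type of $\OP{Symp}_0(\B T^2)$. If that citation is to be avoided, one may instead use the evaluation fibration $\OP{Symp}_0(\B T^2)\to \B T^2$: the translation subgroup furnishes a section realizing the two generators, after which one still needs to verify separately that any loop in the stabilizer $\OP{Symp}_0(\B T^2,z)$ is sent to a null-homotopic loop in $\B T^2\setminus\{0\}$ by $\OP{ev}_{x,y}$, which demands a direct argument.
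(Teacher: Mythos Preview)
Your proof is correct and takes essentially the same approach as the paper: both invoke the homotopy equivalence $\B T^2\simeq\OP{Symp}_0(\B T^2)$ (the paper states it for the inclusion of translations, you cite Gromov) to reduce to translation loops, and then verify directly that these yield central braids. The paper records the outcome as $\gamma(\ell,x,y)=a_1^m a_2^m b_1^n b_2^n$ with $a_1a_2,\,b_1b_2$ generating the center, while your observation that $\ell_t(x)-\ell_t(y)$ is constant for translations is exactly the same computation read through the projection~$\pi$.
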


\begin{proof}
Since the inclusion  $\B T^2\to \OP{Symp}_0(\B T^2)$ (where the torus acts on
itself by translations) is a homotopy equivalence, the loop $\ell$ is isotopic
to a concatenation of loops $h_a\colon(u,v)\mapsto (u+t,v)$ and
$h_b\colon(u,v)\mapsto (u,v+s)$. This implies that
$$
\gamma(\ell,x,y) = a_1^m a_2^m b_1^n b_2^n\in \B P_2(\B T^2),
$$
for some $m,n\in \B Z$. Observe that the center of $\B P_2(\B T^2)$
is isomorphic to $\B Z^2$ and is generated by $a_1a_2$ and $b_1b_2$
(see Lemma \ref{L:P2(T)}.
\end{proof}

It follows from the lemma that if $\psi\in \B Q(\B P_2(\B T^2))$
vanishes on the center then $\psi(\gamma(h,x,y))$ is well
defined for $h\in \OP{Symp}_0(\B T^2)$. We thus have a lift
of the Gambaudo-Ghys homomorphism
$$
\widehat{\C G}\colon \B Q(\B P_2(\B T^2),{\tt CENTER})\to
\B Q(\OP{Symp}_0(\B T^2)).
$$
\begin{lemma}\label{L:centre}
The composition
$$
\B Q(\B B_2(\B T^2),{\sc{\tt CENTER}})\stackrel{\iota^*}\to
\B Q(\B P_2(\B T^2),{\sc{\tt CENTER}})\stackrel{\widehat{\C G}}\to
\B Q(\OP{Symp}_0(\B T^2))
$$
is injective.
\end{lemma}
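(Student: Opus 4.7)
The plan mirrors the proof of Proposition \ref{P:injectivity}: construct explicit test diffeomorphisms in $\OP{Symp}_0(\B T^2)$ on which $\widehat{\C G}(\iota^*\psi)$ evaluates nontrivially. First I would verify that $\iota^*$ is injective: since $\B P_2(\B T^2)$ is normal of index two in $\B B_2(\B T^2)$, every $g\in\B B_2(\B T^2)$ satisfies $g^2\in\B P_2(\B T^2)$, and by homogeneity $\psi(g)=\psi(g^2)/2=0$ whenever $\psi$ vanishes on $\B P_2(\B T^2)$. By Proposition \ref{P:palindromes}, a nonzero $\iota^*\psi$ equals $\pi^*\widetilde\psi$ for a unique nonzero $\widetilde\psi\in \B Q(\B F_2,{\tt PAL})$; the task reduces to finding $\phi\in\OP{Symp}_0(\B T^2)$ with $\widehat{\C G}(\pi^*\widetilde\psi)(\phi)\neq 0$.

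I would take $\phi$ Hamiltonian — on $\OP{Ham}(\B T^2)\subset\OP{Symp}_0(\B T^2)$ the lift $\widehat{\C G}$ agrees with $\C G$ — and adapt the eggbeater construction of Proposition \ref{P:injectivity} using \emph{asymmetric} bump functions. Replace the symmetric $F_s$ by a smooth $\widetilde F_s$ supported on the single interval $[1/4-s-\epsilon,1/4+\epsilon]$ with $\widetilde F_s'=1$ on $[1/4-s,1/4-\epsilon]$; the corresponding time-one maps $\widetilde v,\widetilde h\in\OP{Ham}(\B T^2)$ carry out a full twist on single half-strips $A_v$ and $A_h$, whose intersection is a single rectangle rather than the four-quadrant red square of the symmetric construction. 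Choose $w\in\B F_2$ such that $\widetilde\psi(\tau(w))\neq 0$, where $\tau\in\OP{Aut}(\B F_2)$ is the substitution induced by the construction (explicitly $\tau(a)=b^{-1}$, $\tau(b)=a$); since $\tau$ commutes with $\sigma$, $\widetilde\psi\circ\tau$ is another nonzero palindrome-vanishing quasimorphism, so such a $w$ exists.

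Setting $g=w(\widetilde v,\widetilde h)\in\OP{Ham}(\B T^2)$, I would compute $\widehat{\C G}(\pi^*\widetilde\psi)(g)$ region by region. For $(x,y)\in(A_v\cap A_h)\times(\B T^2\setminus(A_v\cup A_h))$ the braid projects to $\tau(w)$, contributing $\mathrm{vol}(A_v\cap A_h)\cdot\mathrm{vol}(\B T^2\setminus(A_v\cup A_h))\cdot\widetilde\psi(\tau(w))$. Cross-region configurations in which exactly one point lies in $A_v$ and exactly one in $A_h$ produce $\sigma$-translates of $\tau(w)$, having the same $\widetilde\psi$-value by $\sigma$-invariance. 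All other configurations yield either trivial projections or powers of a single generator (palindromes), on which $\widetilde\psi$ vanishes. Transition regions have total measure $O(\epsilon)$ and contribute a bounded correction. For $\epsilon\ll s$ the main term dominates, and Lemma \ref{L:h(x)=x} on the full-measure complement of the transition regions licenses the $p\to\infty$ limit.

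The main obstacle is the careful braid bookkeeping for the asymmetric eggbeater, especially verifying that every non-palindromic cross-region contribution is a $\sigma$-translate of $\tau(w)$ rather than a $\sigma_a$- or $\sigma_b$-translate (for which $\sigma$-invariance of $\widetilde\psi$ gives no control). If a naive asymmetric construction yields such problematic cross-region contributions, they can be suppressed by further localizing $\widetilde v$ inside the support of $\widetilde h$ — for instance by multiplying $\widetilde V$ by an extra bump in the orthogonal variable — so that the offending region-pairs have zero measure. Once the dominant contribution is isolated, the transition-region estimate and the passage to the limit as $p\to\infty$ follow the template of Proposition \ref{P:injectivity}.
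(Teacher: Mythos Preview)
Your approach differs from the paper's, which gives no construction at all: it simply invokes the Ishida-type argument of Theorem~2 in \cite{MR3391653}, whose content is that the kernel of $\widehat{\C G}\circ\iota^*$ on $\B Q(\B B_2(\B T^2))$ consists of homomorphisms. Since conjugation by $\sigma$ forces $a_1=a_2$ and $b_1=b_2$ in $H_1(\B B_2(\B T^2);\B R)$ while $\sigma^2$ is a commutator, the central elements $a_1a_2$ and $b_1b_2$ already span $H_1\otimes\B R$, so no nonzero homomorphism vanishes on the centre and injectivity follows.

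Your explicit eggbeater route, by contrast, has a real gap at exactly the point you flag as the ``main obstacle''. For $(x,y)\in (A_v\setminus A_h)\times(A_h\setminus A_v)$ the braid is $w(b_1,a_2)$, which projects to $w(b,a^{-1})$: this is the $\sigma_a$-translate of $w(b,a)$, not the $\sigma$-translate. A one-line inclusion--exclusion shows that the two relevant volumes, $|A_v\cap A_h|\cdot|\B T^2\setminus(A_v\cup A_h)|$ and $|A_v\setminus A_h|\cdot|A_h\setminus A_v|$, are \emph{both} equal to $st(1-s)(1-t)$ for strips of widths $s,t$, so no choice of widths separates them and the leading term of the integral is a positive multiple of $\bigl(\widetilde\psi+\widetilde\psi\circ\sigma_a\bigr)(w(b,a))$. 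This vanishes for every $w$ whenever $\widetilde\psi$ lies in the $(-,-)$-isotypic component of the $\B Z/2\times\B Z/2$-action, and that component meets $\B Q(\B F_2,{\tt PAL})$ nontrivially (take $\widetilde\psi=\psi_0-\psi_0\circ\sigma_a-\psi_0\circ\sigma_b+\psi_0\circ\sigma$ for generic $\psi_0\in\B Q(\B F_2)$). Your proposed rescue --- multiplying $\widetilde V$ by a bump $\chi(y)$ supported in the $y$-range of $A_h$ --- replaces the shear by a Hamiltonian compactly supported in a rectangle; its flow lines are the closed level curves of $\widetilde F_s(x)\chi(y)$, which never wind around the torus, so the resulting braid never contains the generator $b_1$ and the construction loses the very feature it needs.
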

The proof of this lemma is essentially the same as the proof of
Theorem 2 in \cite{MR3391653}. If $\psi\in \B Q(\B F_2)$ then
the composition $\psi\circ \pi$ vanishes on the center and hence
every quasimorphism on $\B F_2$ yields a quasimorphism on the
group of symplectic diffeomorphisms. In order to ensure that
it is nontrivial we require that $\psi\circ \pi$ extends
to the full braid group and this holds if the quasimorphism
$\psi$ vanishes on palindromes (see Proposition~\ref{P:palindromes}).
\begin{corollary}\label{C:palindromes}
The composition
$$
\B Q(\B F_2,{\tt PAL})\stackrel{\pi^*}\to \B Q(\B P_2(\B T^2))
\stackrel{\widehat{\C G}}\to \B Q(\OP{Symp}_0(\B T^2))
$$
is injective.\qed
\end{corollary}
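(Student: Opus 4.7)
The plan is to reduce the injectivity of $\widehat{\C G}\circ \pi^*$ to Lemma~\ref{L:centre} by lifting each palindrome-vanishing quasimorphism from $\B F_2$ all the way to the full braid group $\B B_2(\B T^2)$, using Proposition~\ref{P:palindromes}.

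First I would verify that the composition is even well defined, i.e.\ that for every $\psi\in\B Q(\B F_2)$ the pullback $\pi^*\psi$ automatically kills the center of $\B P_2(\B T^2)$. By Lemma~\ref{L:P2(T)} this center is precisely the abelian $\B Z^2$-factor, which equals the kernel of the projection $\pi\colon \B P_2(\B T^2)\to \B F_2$; hence $\pi^*\psi$ vanishes on $Z(\B P_2(\B T^2))$, and so $\widehat{\C G}\circ\pi^*$ is a map from $\B Q(\B F_2)$ to $\B Q(\OP{Symp}_0(\B T^2))$, in particular from the subspace $\B Q(\B F_2,{\tt PAL})$.

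Next, fix $\psi\in\B Q(\B F_2,{\tt PAL})$. By Proposition~\ref{P:palindromes} there exists an extension $\widetilde{\psi}\in\B Q(\B B_2(\B T^2))$ with $\iota^*\widetilde{\psi}=\pi^*\psi$. The key additional observation, and the step I expect to be the main point of the proof, is that this extension automatically vanishes on the center of the \emph{full} braid group $\B B_2(\B T^2)$. I would argue as follows: since $\B P_2(\B T^2)$ has index $2$ in $\B B_2(\B T^2)$, every $z\in Z(\B B_2(\B T^2))$ satisfies $z^2\in \B P_2(\B T^2)$, and moreover $z^2$ still commutes with every element of the subgroup, so $z^2\in Z(\B P_2(\B T^2))$. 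Therefore
\[
\widetilde{\psi}(z^2)=(\iota^*\widetilde{\psi})(z^2)=(\pi^*\psi)(z^2)=0,
\]
and homogeneity of $\widetilde{\psi}$ forces $\widetilde{\psi}(z)=0$.

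To conclude, suppose $\widehat{\C G}(\pi^*\psi)=0$. By the previous step $\widetilde{\psi}\in \B Q(\B B_2(\B T^2),{\tt CENTER})$, and $\widehat{\C G}(\iota^*\widetilde{\psi})=\widehat{\C G}(\pi^*\psi)=0$. Injectivity of the composition in Lemma~\ref{L:centre} then yields $\widetilde{\psi}=0$, hence $\pi^*\psi=\iota^*\widetilde{\psi}=0$, and since $\pi\colon\B P_2(\B T^2)\to\B F_2$ is surjective this gives $\psi=0$. Beyond assembling the cited results, the entire content of the argument is the brief index-two observation guaranteeing that the palindrome hypothesis suffices for vanishing on the larger center.
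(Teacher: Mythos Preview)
Your proposal is correct and follows essentially the same approach as the paper: extend $\pi^*\psi$ to $\B B_2(\B T^2)$ via Proposition~\ref{P:palindromes} and then invoke Lemma~\ref{L:centre}. The paper records the corollary with only a \qed, treating it as immediate from the preceding discussion; your index-two observation that $z^2\in Z(\B P_2(\B T^2))$ for $z\in Z(\B B_2(\B T^2))$ makes explicit the one point the paper leaves to the reader, namely that the extension lands in $\B Q(\B B_2(\B T^2),{\tt CENTER})$.
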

Since $\OP{Ham}(\B T^2)$ is equal to the commutator subgroup
of $\OP{Symp}_0(\B T^2)$ the kernel of the homomorphism
$\B Q(\OP{Symp}_0(\B T^2))\to \B Q(\OP{Ham}(\B T^2))$ induced
by the inclusion consists of homomorphisms. Since
$\B Q(\B F_2,{\tt PAL})$ contains no homomorphism we get that
the composition
$$
\B Q(\B F_2,{\tt PAL})\to \B Q(\OP{Ham}(\B T^2))
$$
is injective.

\begin{remark}
This is a slightly stronger statement than Proposition~\ref{P:injectivity} and
it could serve as an alternative part of the proof of the main
Theorem~\ref{T:main}.  We chose a more direct approach in order to have a
complete proof which makes the paper  selfcontained and also because the proof
of Proposition \ref{P:injectivity} provides explicit examples of
diffeomorphisms. On the other hand, the above arguments allow us
to provide examples of Calabi quasimorphisms which are presented next.
\end{remark}

\subsection{The Calabi property and continuity}\label{SS:snake}
Let $(M,\omega)$ be a symplectic manifold and let $B\subset M$ is a
displaceable symplectic ball. A homogeneous quasimorphism
$\Psi\colon \OP{Ham}(M,\omega)\to \B R$ is called {\em Calabi} (or has the
Calabi property) if its restriction to a subgroup $\OP{Ham}(B)$ coincides with
the Calabi homomorphism.  The definition is due to Entov and Polterovich
\cite{MR1979584} who constructed first examples of Calabi quasimorphisms using
quantum homology. Their examples include the sphere $\B S^2$ and they asked
whether there were Calabi quasimorphisms for other surfaces.  Pierre Py gave a
positive answer to this question in \cite{MR2224660, MR2253051}.

Here we provide a Calabi quasimorphism by producing a slightly modified example.

\begin{example}[The snake quasimorphism]\label{E:snake}
Let $w\in \B F_2$ be an element. It defines a path on the plane starting at
the origin, consisting of horizontal and vertical segments of integer
length with turning points on the integer lattice.
See Figure \ref{F:snake} for an example.

\begin{figure}[h]
\begin{tikzpicture}[line width=5pt, scale=0.26]
\draw[step=100pt, line width=1pt] (-18,-10) grid (18,16);
\draw[green, ->, line width=2pt]
(0,-12) -- (0,18);
\draw[green, ->, line width=2pt]
(-20,0) -- (20,0);
\draw[blue]
(0,0) -- (7,0) -- (7,-3.5) -- (14,-3.5) -- (14,14)
-- (3.5,14) -- (3.5,-7) -- (-7,-7) -- (-7,7) -- (7,7)
-- (7,10.5);
\end{tikzpicture}
\caption{The element $w=a^2b^{-1}a^2b^{5}a^{-3}b^{-6}a^{-3}b^{4}a^4b$}
\label{F:snake}
\end{figure}

Let ${\xi}\colon \B F_2\to \B Z$
be defined by ${\xi}(w):=  L(w) -  R(w),$
where $L(w)$ and $R(w)$ denote the number of the left and right turns of
the path defined by $w$.
Thus the value at an element drawn in Figure \ref{F:snake} is
${\xi}(w) = 5 - 4 =1$.

If $w\in \B F_2$ is a palindrome then the induced path is symmetric with
respect to the half turn about its mid point and hence the initial turns
become the opposite terminal turns and hence they cancel. Thus
${\xi}$ vanishes on palindromes and hence its homogenization vanishes
both on palindromes and primitive elements.
Since ${\xi}\left([a,b]^n\right)=4n-1$ we see that ${\xi}$
is unbounded. Hence its homogenization $\widehat{\xi}$ is nontrivial and
$\widehat{\xi}([a,b])=4$. Thus the quasimorphism
$\C G(\pi^*\xi)$ is nontrivial and has the Calabi property.
\hfill $\diamondsuit$
\end{example}

\subsection*{Acknowledgments} We would like to thank Luis Haug for fruitful discussions
and Pierre Py for comments on the first version of the paper.
We thank the CRM and ISM who supported the visit of K\k{e}dra in Montreal.
K\k{e}dra  also thanks the Max Planck Institute for Mathematics for supporting
his visit in Bonn.

Part of this work has been done during Brandenbursky's stay
at IHES and CRM. We wish to express his gratitude to both institutes.
Brandenbursky was supported by CRM-ISM fellowship and NSF grant No. 1002477

This work has been done during Shelukhin's stay in CRM, ICJ Lyon 1, Institut Mittag Leffler,
and IAS. He thanks these institutions for their warm hospitality. He was partially supported by CRM-ISM fellowship,
ERC Grant RealUMan, Mittag Leffler fellowship, and NSF grant No. DMS-1128155.

\bibliography{bibliography}
\bibliographystyle{plain}

\end{document}